\numberwithin{theorem}{section}
\newtheorem{theoremm}{Theorem}\numberwithin{theoremm}{subsection}
\newtheorem{lemmma}[theoremm]{Lemma}
\numberwithin{theoremmm}{subsubsection}
\theoremstyle{remark}
\newcommand{\Aut}{\operatorname{Aut}}
\newcommand{\lcm}{\operatorname{lcm}}
\newcommand{\ord}{\operatorname{ord}}
\newcommand{\A}{\operatorname{A}}
\newcommand{\id}{\operatorname{id}}
\newcommand{\e}{\mathrm{e}}
\newcommand{\GL}{\operatorname{GL}}
\newcommand{\G}{\mathcal{G}}
\newcommand{\Mod}[1]{\ (\textup{mod}\ #1)}
\newcommand{\IN}{\mathbb{N}}
\newcommand{\IF}{\mathbb{F}}
\newcommand{\IZ}{\mathbb{Z}}
\newcommand{\Coll}{\operatorname{Coll}}
\newcommand{\rord}{\operatorname{rord}}
\newcommand{\Acal}{\mathcal{A}}
\newcommand{\Exp}{\operatorname{Exp}}
\begin{document}

\title{Computation of orders and cycle lengths of automorphisms of finite solvable groups}

\author{Alexander Bors\thanks{Johann Radon Institute for Computational and Applied Mathematics (RICAM), Altenberger Stra{\ss}e 69, 4040 Linz, Austria. \newline E-mail: \href{mailto:alexander.bors@ricam.oeaw.ac.at}{alexander.bors@ricam.oeaw.ac.at} \newline The author is supported by the Austrian Science Fund (FWF), project J4072-N32 \enquote{Affine maps on finite groups}. \newline 2010 \emph{Mathematics Subject Classification}: Primary: 20D10, 20D45. Secondary: 12E05, 12E20, 15A21. \newline \emph{Key words and phrases:} Finite groups, Polycyclic groups, Computational group theory, Group automorphisms}}

\date{\today}

\maketitle

\abstract{Let $G$ be a finite solvable group, given through a refined consistent polycyclic presentation, and $\alpha$ an automorphism of $G$, given through its images of the generators of $G$. In this paper, we discuss algorithms for computing the order of $\alpha$ as well as the cycle length of a given element of $G$ under $\alpha$. We give correctness proofs and discuss the theoretical complexity of these algorithms. Along the way, we carry out detailed complexity analyses of several classical algorithms on finite polycyclic groups.}

\section{Introduction}\label{sec1}

\subsection{Background and aim of the paper}\label{subsec1P1}

The theory of polycyclic groups is a powerful tool for designing efficient algorithms for many computational problems on finite solvable groups, see \cite[Chapter 8]{HEO05a} for an introduction. One of these is an algorithm, originally described by Robinson in \cite{Rob81a} and later implemented and studied in detail by Smith in \cite{Smi94a}, for computing (generators of) the automorphism group of a finite solvable group $G$, assumed to be given as a \emph{(finite) pc group}, i.e., through a refined consistent polycyclic presentation (see \cite[Definitions 8.7, 8.10 and 8.18]{HEO05a} for the precise meaning of this, and \cite[Section 8.9]{HEO05a} for a concise overview of the main ideas on which the algorithm is based), thus providing a basis for the computational study of $\Aut(G)$ as an abstract group as well as of its natural action on $G$. However, it is not immediately clear which computational problems concerning $\Aut(G)$ can be solved efficiently on this basis, particularly since $\Aut(G)$ is in general not solvable. The aim of this paper is two-fold:

\begin{enumerate}
\item to discuss natural algorithms for the basic tasks of computing the orders of elements of the group $\Aut(G)$ (which are assumed to be given through their images of the presentation generators of $G$, as is the case by default in GAP \cite{GAP4}) and of determining the cycle length of a given element of $G$ under a given automorphism of $G$. We will prove the correctness of these algorithms (Theorem \ref{mainTheo1} and its proof in Section \ref{sec2}) and provide a theoretical complexity analysis for them (Theorem \ref{mainTheo2} and its proof in Section \ref{sec3}). We note that these algorithms have been implemented by the author in GAP, and the corresponding GAP source code is available from the author's website under \url{https://alexanderbors.wordpress.com/sourcecode/pcautord/}.
\item to give (in Subsection \ref{subsec3P1}) a detailed complexity analysis for several classical algorithms on finite pc groups, most notably (in the form of Theorem \ref{mainTheo3}) of a slightly modified version of an algorithm, due to Cannon, Eick and Leedham-Green \cite[Subsection 3.1]{CEL04a}, for computing a polycyclic generating sequence (pcgs) of $G$ that refines (in the precise sense of \cite[beginning of Section 2, p.~1446]{CEL04a}) the so-called LG-series of $G$, a characteristic series in $G$ with elementary abelian factors (see \cite[Subsection 2.1]{CEL04a}, where this series is called the \emph{elementary abelian nilpotent-central series} instead). This is useful for many applications, including our algorithms, but to the author's knowledge, there are no published results on the complexity of this algorithm. In fact, it seems that many important algorithms on pc groups currently lack published complexity analyses (quite contrarily to algorithms on permutation groups, for which Seress' book \cite{Ser03a} is a rich source of detailed complexity discussions), and we hope that our Theorem \ref{mainTheo3} and the auxiliary results from Subsection \ref{subsec3P1} will make such analyses more comfortable to do in the future.
\end{enumerate}

\subsection{Main results}\label{subsec1P2}

In this subsection, we state the main results of this paper in the form of Theorems \ref{mainTheo1}, \ref{mainTheo2} and \ref{mainTheo3} below. Theorem \ref{mainTheo1} is concerned with deterministic versions and Theorem \ref{mainTheo2} with probabilistic (Las Vegas) versions of the algorithms for computing automorphism orders resp.~cycle lengths that were mentioned in the first enumeration point in Subsection \ref{subsec1P1}; we give these algorithms in Subsection \ref{subsec1P4} below in pseudocode as Algorithms \ref{algo1} and \ref{algo2}. Actually, Algorithm \ref{algo2} is a bit more general than that, as it serves to compute cycle lengths of \emph{bijective affine maps on $G$} (functions on $G$ of the form $\A_{t,\alpha}:x\mapsto t\alpha(x)$ for a fixed $t\in G$ and $\alpha\in\Aut(G)$; with $t:=1$, this includes all automorphisms of $G$). We will see in Subsection \ref{subsec2P2} why it is natural to work with this larger class of functions.

\begin{theoremm}\label{mainTheo1}
Algorithms \ref{algo1} and \ref{algo2}, viewed as deterministic algorithms, are correct, i.e., they terminate on each input of the indicated form with the asserted output.
\end{theoremm}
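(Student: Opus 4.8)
The plan is to verify, line by line, that Algorithms \ref{algo1} and \ref{algo2} produce the claimed quantities — the order of a given automorphism $\alpha$ of $G$, respectively the cycle length of a given element $x\in G$ under a bijective affine map $\A_{t,\alpha}$ — whenever all internal subroutines (in particular those computing a pcgs refining the LG-series, and those performing membership and order tests in pc groups) return correct answers. Since the theorem asks us to regard the algorithms as deterministic, we may assume every randomized subcomputation has in fact succeeded, so that the correctness argument reduces to a purely structural statement about the recursion the algorithms perform.

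First I would set up the algebraic backbone. The key observation (anticipated in Subsection \ref{subsec2P2}) is that an automorphism, or more generally an affine map, of $G$ induces compatible maps on each term of a characteristic series of $G$ and on each successive quotient. Choosing the LG-series (or a suitable characteristic refinement of it with elementary abelian factors), one gets that $\ord(\alpha)$ is the least common multiple of the orders of the induced automorphisms on the factors, \emph{together with} a correction coming from how $\alpha$ acts ``between'' the factors; the standard way to handle this is the observation that if $\beta$ is an automorphism of a group $H$ with a normal subgroup $N$ such that $\beta$ acts trivially on both $N$ and $H/N$, then $\beta$ is determined by a $1$-cocycle-type datum, and the order of $\beta$ divides $\exp(N)$ (and is controlled by $p$-th power maps on the relevant elementary abelian sections). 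For cycle lengths under $\A_{t,\alpha}$ one argues analogously: the $k$-th iterate $\A_{t,\alpha}^k$ has the explicit form $x\mapsto (t\alpha(t)\cdots\alpha^{k-1}(t))\,\alpha^k(x)$, so the cycle length of $x$ is the smallest $k$ with $\alpha^k$ fixing the relevant coset and the accumulated translation collapsing — and this again descends through the series, the only subtlety being the passage from a section to the next, which contributes a factor that is a prime power dividing the exponent of an elementary abelian layer.

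The steps, in order, would be: (1) state and prove the structural lemma on automorphisms/affine maps trivial on $N$ and $H/N$ for $N$ elementary abelian central-type layers, extracting the exact prime-power multiplier; (2) prove by induction along the pcgs (equivalently, along the refined LG-series) that the order of $\alpha$ equals the value accumulated by Algorithm \ref{algo1}, checking that each loop iteration multiplies in precisely the multiplier from step (1) computed via a discrete-log / linear-algebra computation over the prime field, and that the base case (trivial group) is handled; (3) do the analogous induction for Algorithm \ref{algo2}, replacing ``order of $\alpha$'' by ``cycle length of $x$ under $\A_{t,\alpha}$'', and verifying that the translation part $t\alpha(t)\cdots\alpha^{k-1}(t)$ is tracked correctly by the algorithm's bookkeeping variable across iterations; (4) check termination — immediate, since the recursion strictly shortens the pcgs — and check that all preconditions of the invoked subroutines (e.g.\ that the map handed down to a quotient really is an automorphism resp.\ a bijective affine map of that quotient, so that Theorem \ref{mainTheo3} and the classical pc-group routines apply) are met at each recursive call.

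The main obstacle I expect is step (1) together with its correct incorporation into the induction in steps (2)–(3): getting the prime-power multiplier exactly right (not just up to divisibility) when the induced map on a layer is nontrivial but a suitable power of it becomes trivial, so that the ``orbit collapses onto the next section'' — one must argue carefully that the order on the whole section is the order on the quotient times the order of the residual map on the subsection, rather than merely the lcm, and this requires knowing that the residual map lands in a vector space on which the quotient-part of $\alpha$ acts, so that one is really computing the order of an affine transformation of an $\IF_p$-vector space (solvable via the rational canonical form / the explicit description of orders of affine maps over finite fields, cf.\ the Secondary MSC classifications $15\mathrm{A}21$, $12\mathrm{E}20$ cited in the paper). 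Once that layer-by-layer order formula is pinned down, the remaining verifications are routine bookkeeping against the pseudocode of Algorithms \ref{algo1} and \ref{algo2}.
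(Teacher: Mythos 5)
Your proposal follows essentially the same route as the paper: reduce along the LG-series to affine transformations of the elementary abelian layers, use the coset-transfer observation (the paper's Lemma \ref{transferLem}, equivalent to your explicit formula for $\A_{t,\alpha}^k$) to turn the action on a coset into an affine map on the layer, and handle the residual contribution via the fact that an automorphism trivial on $N$ and on $G/N$ with $N$ elementary abelian of exponent $p$ has order $1$ or $p$ (the paper's Lemma \ref{idIdLem}), accumulating the answer multiplicatively rather than as an lcm. The subtlety you flag as the main obstacle is precisely what these two lemmas resolve, so your plan matches the paper's proof in substance.
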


\begin{theoremm}\label{mainTheo2}
Algorithms \ref{algo1} and \ref{algo2}, viewed as Las Vegas algorithms, both have expected running time subexponential in the input length.
\end{theoremm}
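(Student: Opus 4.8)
The plan is to establish the running time bound by combining three ingredients: first, a reduction of both the order computation and the cycle length computation to a bounded sequence of operations on the LG-series refinement of $G$ (whose computation is itself analyzed in Theorem \ref{mainTheo3}); second, a control on the ``arithmetic'' part of the algorithms, where one must compute orders of elements and cycle lengths by factoring certain integers bounded by $|G|$ (this is where the Las Vegas nature and subexponentiality enter); and third, a verification that the total number of group-theoretic operations is polynomial in the input length. I would begin by recalling that the input length is $\Theta(n^2\log{p_{\max}} + \text{(size of }\alpha))$, where $n$ is the composition length of $G$ and $p_{\max}$ the largest prime involved, so that $\log|G| = O(n\log p_{\max})$ is polynomial in the input length; every quantity bounded by a fixed power of $|G|$ then has bit-length polynomial in the input.

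The main body of the argument would then proceed step by step. First, I would invoke Theorem \ref{mainTheo3} to bound the cost of computing the LG-series-refining pcgs of $G$ and the induced matrix actions of $\alpha$ on the elementary abelian factors; this cost is polynomial in the input length by that theorem. Second, for Algorithm \ref{algo1} I would argue that the order of $\alpha$ is recovered from the orders of the actions of $\alpha$ on the factors together with a bounded amount of ``lifting'' data (tracking how the order can grow by a factor of $p$ when passing through a $p$-elementary abelian layer), so that $\ord(\alpha)$ divides a known integer $N$ with $N \le |G|^{O(1)}$ obtainable in polynomial time; the only non-polynomial-time subroutine is factoring $N$, which a Las Vegas algorithm (e.g. a rigorous subexponential factoring method) performs in expected time $\exp(O(\sqrt{\log N \log\log N})) = \exp(O(\sqrt{(\text{input length})\log(\text{input length})}))$, hence subexponential in the input length. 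Third, for Algorithm \ref{algo2}, the analogous analysis applies: the cycle length of $x$ under $\A_{t,\alpha}$ divides a polynomial-time-computable multiple of it bounded by $|G| \cdot \ord(\alpha) \le |G|^{O(1)}$, and again the dominant cost is the Las Vegas factoring of this bound; everything else --- powering affine maps, membership and normal-form computations in the pc group, linear algebra over the factors --- is polynomial in the input length by the auxiliary complexity results of Subsection \ref{subsec3P1}.

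I expect the main obstacle to be the bookkeeping in the second and third steps: namely, proving that the integer $N$ that one factors is genuinely bounded by a fixed polynomial in $|G|$ (equivalently, has bit-length polynomial in the input length) rather than merely by something like $|G|!$ or $\lcm(1,\dots,|G|)$, which would be exponential. This requires a careful structural argument that the order of an automorphism, and the cycle length of an element under an affine map, can be assembled layer-by-layer along the LG-series so that at each of the $O(n)$ layers the relevant multiplier is bounded by $p_{\max}^{O(n)}$ or by the order of a matrix group over $\IF_{p}$ of bounded dimension (which is itself at most $p^{O(d^2)}$ for a $d$-dimensional layer, hence $\le |G|^{O(1)}$ overall); combining these gives $N \le |G|^{O(1)}$. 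Once this bound is in place, the subexponential-time factoring step and the polynomial bounds from Theorem \ref{mainTheo3} and Subsection \ref{subsec3P1} combine routinely to yield the stated expected running time.
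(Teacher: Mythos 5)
Your proposal follows essentially the same route as the paper: Step 1 is covered by Theorem \ref{mainTheo3} together with the standing assumption that $\Coll(P)$ is subexponential in $l(P)$; all remaining group-theoretic work (exponentiation of elements and automorphisms, normal forms, the linear algebra on the elementary abelian layers, and the layer-by-layer lifting loop) costs a number of collection calls polynomial in $l(P)$; and the sole superpolynomial ingredient is Las Vegas integer factorization (the paper uses AKS plus the general number field sieve, and for Algorithm \ref{algo2} additionally Las Vegas polynomial factorization via Rabin and Berlekamp, which runs in expected polynomial time).

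One quantitative point in your ``main obstacle'' paragraph does need repair. The order of $\GL_d(p)$ acting on a layer $V$ of order $p^d$ is roughly $p^{d^2}=|V|^{d}$, which is \emph{not} $|G|^{O(1)}$ in general (it can be of size $|G|^{\Theta(\log|G|)}$); relatedly, ``$N\le|G|^{O(1)}$'' and ``$N$ has bit-length polynomial in the input length'' are not equivalent, and with the $\exp(O(\sqrt{\log N\log\log N}))$ factoring bound you quote, an $N$ of bit-length $l(P)^2$ would no longer yield a subexponential total. The paper sidesteps this by never factoring a multiple of the whole order: following Celler--Leedham-Green, for each layer it factors the characteristic polynomial of $M_i$ over $\IF_{p_i}$ and then factors only the integers $p_i^{d_j}-1$ attached to the irreducible factors of degree $d_j$; since $\sum_j d_j\le l_i$, each such integer is smaller than $|V_i|\le|G|$, hence has bit-length $O(l(P))$, and there are only polynomially many of them. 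Your single-$N$ variant survives if you define $N$ through this characteristic-polynomial decomposition (a product of the $p_i^{d_j}-1$ times a small power of $p_i$, which genuinely is $|G|^{O(1)}$) rather than through $|\GL_{l_i}(p_i)|$.
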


The following two remarks indicate that it is probably very difficult to improve Theorem \ref{mainTheo2}:

\begin{enumerate}
\item A fundamental obstacle to improving Theorem \ref{mainTheo2} by replacing \enquote{subexponential} by \enquote{polynomial} is the fact that all known algorithms for multiplication of elements of a finite polycyclic group, written in collected form (see \cite[Definition 8.13]{HEO05a}), have superpolynomial complexity (see below for a brief overview of results on the complexity of multiplication in (finite) polycyclic groups).
\item Similarly, there are fundamental obstacles to replacing the \enquote{Las Vegas} in Theorem \ref{mainTheo2} by \enquote{deterministic} (and deleting the word \enquote{expected}, of course), as not even for the special case of computing orders of invertible matrices over finite fields, any deterministic algorithms with subexponential worst-case complexity are known (see \cite{CL97a} for such an algorithm, which we will come back to later and which only fails to achieve polynomial complexity because of the seemingly inevitable use of integer factorization).
\end{enumerate}

When $P$ is a refined consistent polycyclic presentation representing the finite solvable group $G$, we denote by $\ell(P)$ the length of $P$ as an algorithm input (see also Subsection \ref{subsec1P3}) and by $\Coll(P)=\Coll_{\Acal}(P)$ the worst-case runtime (in bit operations) of a fixed algorithm $\Acal$ which takes as input the presentation $P$ and two elements of $G$ given in collected form with respect to $P$ and outputs the collected form of the product of these two elements. Throughout the rest of this paper, we assume that $\Acal$ has been fixed and suppress it in the notation $\Coll(P)$. As we will incorporate $\Coll(P)$ in our complexity bounds, we will not be concerned with \enquote{good} choices for $\Acal$ here, but naturally, finding efficient multiplication algorithms is a problem of fundamental importance in computational group theory, and especially for polycyclic groups, it has been studied for a long time by various authors; let us give an overview.

Multiplication of elements of polycyclic groups in collected form is always carried out using some form of \emph{collection} (whence our notation $\Coll(P)$), i.e., replacing minimal non-collected subwords of the concatenation of the two collected forms using the defining relations of the associated polycyclic presentation until one arrives at a word in collected form. The art of making the computations efficient lies in how to choose the next non-collected subword to process at any given step. After earlier approaches following Hall's pioneering theoretical work \cite{Hal34a}, which involved \enquote{collection from the left} with a constraint on the generators involved in the subword, and the discovery, originally due to Neub{\"u}ser, that \enquote{collection from the right} without such a constraint was more efficient (and such algorithms had been implemented by Felsch \cite{Fel76a} as well as Havas and Nicholson \cite{HN76a}), nowadays, it is a common agreement that \enquote{constraintless collection from the left}, henceforth simply \enquote{collection from the left}, is, in practice and on average, the most efficient approach. This convention was preceded by the two fundamental papers \cite{Vau90a} and \cite{LS90a}, due to Vaughan-Lee resp.~Leedham-Green and Soicher, which in combination gave both practical and theoretical evidence that \enquote{collection from the left} is superior to \enquote{collection from the right}. Later work focused on refined variants of \enquote{collection from the left}, usually involving additional assumptions on the presentation; we mention Leedham-Green and Soicher's \enquote{Deep Thought} algorithm \cite{LS98a}, which works for certain polycyclic presentations of nilpotent groups, H{\"o}fling's unpublished preprint \cite{Hof04a}, where the discussed algorithm involves passing from an arbitrary to a \enquote{nice} polycyclic presentation (of an arbitrary finite solvable group), and the relatively recent paper \cite{NN15a} by Newman and Niemeyer, which derives a nice, compact upper bound on the complexity of \enquote{collection from the left} for certain polycyclic presentations (which are such that, as is also argued there \cite[Lemma 1]{NN15a}, each finite solvable group has such a presentation).

As mentioned in Subsection \ref{subsec1P1} already, the following complexity result of independent interest is also important for the proof of Theorem \ref{mainTheo2}.

\begin{theoremm}\label{mainTheo3}
Let $G$ be a finite solvable group, given via a refined consistent polycyclic presentation $P$. One can compute in $O(\ell(P)^8\Coll(P)+\ell(P)^{10})$ bit operations

\begin{itemize}
\item a pc group isomorphism (see the end of Subsection \ref{subsec1P3}) from $P$ to another refined consistent polycyclic presentation $\tilde{P}$ of $G$, yielding a pcgs $\vec{g}$ of $G$ with associated presentation $\tilde{P}$ and whose entries are given in collected form with respect to the generators of $P$, such that $\vec{g}$ refines (in the sense of \cite[beginning of Section 2, p.~1446]{CEL04a}) the LG-series of $G$ (see \cite[Subsection 2.1]{CEL04a}), and
\item the sequence of \emph{final weights} (see \cite[Subsubsection 3.1.1]{CEL04a}) of the entries of $\vec{g}$ with respect to the LG-series of $G$.
\end{itemize}
\end{theoremm}

As $\Coll(P)$ can be made subexponential in $\ell(P)$ through a suitable choice of $\Acal$ by the results discussed above, this shows in particular that the complexity of Step 1 in the two algorithms is subexponential in the input length. The algorithm which we will analyze for proving Theorem \ref{mainTheo3} is essentially the original one from \cite[Subsection 3.1]{CEL04a}; however, a slight modification will be necessary at some point, see the paragraph before Lemma \ref{modifyPcgsLem}.

\subsection{Notation and terminology}\label{subsec1P3}

We denote by $\IN$ the set of natural numbers (including $0$) and by $\IN^+$ the set of positive integers. For a function $f$ and a set $X$, the element-wise image of $X$ under $f$ is denoted by $f[X]$, and the restriction of $f$ to $X$ by $f_{\mid X}$. The identity function on a set $X$ is denoted by $\id_X$. For a positive real number $c$ with $c\not=1$, we denote by $\log_c$ the base $c$ logarithm function, and $\log:=\log_{\e}$ denotes the natural logarithm function (with base the Euler constant $\e$). For a prime power $q$, the finite field with $q$ elements is denoted by $\IF_q$. For a prime element $p$ of a factorial ring $R$ and $x\in R$, we denote by $\nu_p(x)$ the $p$-adic valuation of $x$ (the largest non-negative integer $v$ such that $p^v$ divides $a$, understood to be $\infty$ if $x=0$). The exponent of a finite group $G$ is denoted by $\Exp(G)$, and the order of an element $g\in G$ by $\ord(g)$; this also applies to automorphisms of $G$, viewed as elements of the group $\Aut(G)$. For an element $t$ and an automorphism $\alpha$ of a group $G$, we denote by $\A_{t,\alpha}$ the bijective affine map $G\rightarrow G, x\mapsto t\alpha(x)$. At several points, will use the Kronecker delta $\delta_{x,y}$, which is defined to be $1$ (i.e., the integer $1$ or more generally the unity element of a ring, depending on the context) if $x=y$, and $0$ otherwise. Throughout the paper, we will be using much of the terminology from \cite{CEL04a}; in particular, by a \emph{pcgs} of a finite solvable group $G$, we always mean a polycyclic generating sequence refining some composition series of $G$, i.e., such that all relative orders (in the sense of \cite[Definition 8.2]{HEO05a}) of the pcgs entries are primes (and thus the associated (consistent) polycyclic presentation of $G$ is refined). If $\vec{g}=(g_1,\ldots,g_n)$ is a pcgs of length $n$ of a polycyclic group $G$, then for $h\in G$ and $k\in\{1,\ldots,n\}$, we denote by $\exp_{\vec{g}}(h,k)$ the unique element of $\{0,1,\ldots,p_k-1\}$, where $p_k$ denotes the relative order of $g_k$, such that
\begin{equation}\label{notationEq}
h=g_1^{\exp_{\vec{g}}(h,1)}g_2^{\exp_{\vec{g}}(h,2)}\cdots g_n^{\exp_{\vec{g}}(h,n)}.
\end{equation}
The right-hand side of Formula (\ref{notationEq}), viewed as a \enquote{formal expression} (more precisely, a product of powers of the variables $g_1,\ldots,g_n$ where the exponents are given through their binary digit representations) will be called the \emph{$\vec{g}$-collected form of $g$}, and the tuple $\exp_{\vec{g}}(h):=(\exp_{\vec{g}}(h,1),\exp_{\vec{g}}(h,2),\ldots,\exp_{\vec{g}}(h,n))$ will be called the \emph{$\vec{g}$-exponent vector of $h$} (see also \cite[Definition 8.4]{HEO05a}); these two notions are basically interchangeable, and while the collected form representation is less compact than the exponent vector representation, we will use the former frequently in this paper for better readability. Also, when a finite solvable group $G$ is given through a refined consistent polycyclic presentation $\langle X\mid R\rangle$, then we may speak of the \emph{$X$-collected forms} or \emph{$X$-exponent vectors} of the elements of $G$; whenever we do so, we have in mind the representation of $G$ as the quotient $\operatorname{F}(X)/\langle\langle R\rangle\rangle$ of the free group on $X$ by the normal subgroup generated by the relators in $R$, and we identify the entries of $X$ with their images under the canonical projection $\operatorname{F}(X)\rightarrow\operatorname{F}(X)/\langle\langle R\rangle\rangle=G$, so that we can view $X$ as an actual pcgs of $G$ and this terminology makes sense. Finally, if $\alpha$ is an automorphism of a finite solvable group $G$ and $\vec{g}=(g_1,\ldots,g_n)$ is a pcgs of $G$, then the \emph{$\vec{g}$-collected form of $\alpha$} (resp.~the \emph{$\vec{g}$-exponent matrix of $\alpha$}) is the $n$-tuple consisting of the $\vec{g}$-collected forms (resp.~the $\vec{g}$-exponent vectors) of the images $\alpha(g_i)$.

As for our computational model, we use the same assumptions as in \cite[Section 2]{Hof04a}. In particular, considering the numbers $\ell(P)$ and $\Coll(P)$ associated with the refined consistent polycyclic presentation $P$ and introduced in Subsection \ref{subsec1P2}, we assume that there are positive constants $c,d$ such that $c\log{|G|}\leq \ell(P)\leq(\log{|G|})^d$ and that $\ell(P)\leq\Coll(P)\leq f(\ell(P))$ for some function $f$ of subexponential growth. We also denote by $d(P)$ the number of presentation generators of $P$, which equals the composition length of the abstract group $G$ represented by $P$, by $e(P)$ the maximum binary representation length of one of the relative orders of the presentation generators of $P$ (i.e., of one of the prime divisors of $|G|$), and we assume that $\max\{d(P),e(P)\}\leq \ell(P)$. Note also that $d(P)=d(Q)$ and $e(P)=e(Q)$ for any other refined consistent polycyclic presentation $Q$ of the same abstract group $G$. If $Q$ is another refined consistent polycyclic presentation of $G$, then a \emph{pc group isomorphism} $P\rightarrow Q$ consists of two tuples whose entries are, for fixed polycyclic generating sequences $\pi_P$ resp.~$\pi_Q$ of $G$ with associated presentation $P$ resp.~$Q$, the $\pi_Q$-exponent vectors of the elements of $\pi_P$, resp.~the $\pi_P$-exponent vectors of the elements of $\pi_Q$.

\subsection{Our two algorithms}\label{subsec1P4}

Below, we give the two algorithms for automorphism order resp.~affine map cycle length computation in pseudocode. For better readability, we will use the abbreviation $L(i):=\sum_{t=1}^i{\ell_t}$. At the moment, we do not explain the ideas behind the single steps of the algorithms (since this would require some theoretical results discussed in Subsection \ref{subsec2P2}), but they will become clear during the correctness proofs in Subsections \ref{subsec2P3} and \ref{subsec2P4}.

\begin{algorithm}
\SetKwInOut{Input}{input}\SetKwInOut{Output}{output}

\Input{A finite solvable group $G$, given through a refined consistent polycyclic presentation $\langle X\mid R\rangle$, and (the $X$-exponent matrix of) an automorphism $\alpha$ of $G$.}
\Output{The order of $\alpha$, i.e., the least common multiple of the cycle lengths of the permutation $\alpha$ on $G$.}
\BlankLine
\nl Compute the following (see \cite[Subsection 3.1]{CEL04a} and our Subsection \ref{subsec3P1} for details):
\begin{itemize}
\item another refined consistent polycylic presentation $\langle Y\mid S\rangle$, with $Y=(y_1,\ldots,y_n)$, associated with some other pcgs $\vec{g}=(g_1,\ldots,g_n)$ of $G$, which refines (in the sense of \cite[beginning of Section 2, p.~1446]{CEL04a}) the \emph{elementary abelian nilpotent-central series of $G$} (also called the \emph{LG-series of $G$}; see \cite[Subsection 2.1, p.~1447]{CEL04a} for its definition) $G=G_1>G_2>\cdots>G_r>G_{r+1}=\{1\}$.
\item the sequence of \emph{final weights} (see \cite[Subsubsection 3.1.1, p.~1450]{CEL04a}) of the $g_i$ with respect to the LG-series of $G$.
\item the $\vec{g}$-exponent matrix of $\alpha$.
\end{itemize}

\nl \For{$i=1,\ldots,r$}{
\nl Set $\ell_i$ to be the number of entries of $\vec{g}$ with final weight $i$.

\nl Set $p_i$ to be the common relative order of the pcgs entries $g_j$ with $j=L(i-1)+1,L(i-1)+2,\ldots,L(i)$.
}

\nl \For{$i=1,\ldots,r$}{
\nl Set $M_i=(a(i,j,k))_{j,k=1}^{\ell_i}$ to be the (invertible) $(\ell_i\times \ell_i)$-matrix over $\IF_{p_i}=\IZ/p_i\IZ=\{\overline{0},\ldots,\overline{p_i-1}\}$ such that $a(i,j,k)=\overline{\exp_{\vec{g}}(\alpha(g_{L(i-1)+k}),L(i-1)+j)}$.

\nl Set $o_i$ to be the order of $M_i\in\GL_{\ell_i}(p_i)$, computed as described in Subsection \ref{subsec2P1}.
}

\nl Set $o:=\lcm_{i=1,\ldots,r}{o_i}$.

\nl Compute the $\vec{g}$-exponent matrix of $\beta:=\alpha^o$.

\nl \For{$i=1,\ldots,r-1$}{
\nl \If{at least one of the numbers $\exp_{\vec{g}}(\beta(g_j),k)$ for $j=1,2,\ldots,L(i)$ and $k=L(i)+1,L(i)+2,\ldots,L(i+1)$ is nonzero}{
\nl Replace $o$ by $o\cdot p_{i+1}$.

\nl Replace (the $\vec{g}$-exponent matrix of) $\beta$ by (the $\vec{g}$-exponent matrix of) $\beta^{p_{i+1}}$.
}
}

\nl Return $o$.
\caption{Automorphism order computation}\label{algo1}
\end{algorithm}

\begin{algorithm}
\SetKwInOut{Input}{input}\SetKwInOut{Output}{output}

\Input{A finite solvable group $G$, given through a refined consistent polycyclic presentation $\langle X\mid R\rangle$, (the $X$-exponent vectors of) elements $g,t\in G$ and (the $X$-exponent matrix of) an automorphism $\alpha$ of $G$.}
\Output{The cycle length of $g$ under $\A_{t,\alpha}$.}

\nl Compute the following (see \cite[Subsection 3.1]{CEL04a} and our Subsection \ref{subsec3P1} for details):
\begin{itemize}
\item another refined consistent polycylic presentation $\langle Y\mid S\rangle$, with $Y=(y_1,\ldots,y_n)$, associated with some other pcgs $\vec{g}=(g_1,\ldots,g_n)$ of $G$, which refines (in the sense of \cite[beginning of Section 2, p.~1446]{CEL04a}) the \emph{elementary abelian nilpotent-central series of $G$} (also called the \emph{LG-series of $G$}; see \cite[Subsection 2.1, p.~1447]{CEL04a} for its definition) $G=G_1>G_2>\cdots>G_r>G_{r+1}=\{1\}$.
\item the sequence of \emph{final weights} (see \cite[Subsubsection 3.1.1, p.~1450]{CEL04a}) of the $g_i$ with respect to the LG-series of $G$.
\item the $\vec{g}$-exponent matrix of $\alpha$, and the $\vec{g}$-exponent vectors of $t$ and $g$.
\end{itemize}

\nl \For{$i=1,\ldots,r$}{
\nl Set $\ell_i$ to be the number of entries of $\vec{g}$ with final weight $i$.

\nl Set $p_i$ to be the common relative order of the pcgs entries $g_j$ with $j=L(i-1)+1,L(i-1)+2,\ldots,L(i)$.
}

\nl Set $\lambda:=1$.

\nl \For{$i=1,\ldots,r$}{
\nl Set $M_i$ to be the (invertible) $(\ell_i\times \ell_i)$-matrix $(a(i,j,k))_{j,k=1}^{l_i}$ over $\IF_{p_i}=\IZ/p_i\IZ=\{\overline{0},\ldots,\overline{p_i-1}\}$ such that $a(i,j,k)=\overline{\exp_{\vec{g}}(\alpha(g_{L(i-1)+k}),L(i-1)+j)}$.

\nl Set $u_i$ to be the $\ell_i$-dimensional vector $(x(i,j))_{j=1}^{\ell_i}$ over $\IF_{p_i}$ such that $x(i,j)=\overline{\exp_{\vec{g}}(g^{-1}t\cdot\alpha(g),L(i-1)+j)}$.

\nl Set $\lambda_i$ to be the cycle length of the zero vector in $\IF_{p_i}^{\ell_i}$ under the bijective affine transformation $v\mapsto M_iv+u_i$, computed as described in Subsection \ref{subsec2P1}.

\nl Replace $\lambda$ by $\lambda\cdot\lambda_i$.

\nl Replace $\A_{t,\alpha}$ by $\A_{t,\alpha}^{\lambda_i}$ (that is, replace (the $\vec{g}$-collected forms of) $t$ and $\alpha$ accordingly).
}

\nl Return $\lambda$.
\caption{Affine map cycle length computation}\label{algo2}
\end{algorithm}

\section{Details on and correctness proofs for Algorithms \ref{algo1} and \ref{algo2}}\label{sec2}

\subsection{Details on computing orders and cycle lengths in the elementary abelian case}\label{subsec2P1}

In this subsection, we give details on how we intend to perform the computation of the order of the matrix $M_i$ in Step 7 of Algorithm \ref{algo1} and of the cycle length $\lambda_i$ in Step 9 of Algorithm \ref{algo2}, both deterministically and probabilistically. This serves two purposes: Firstly, to remove ambiguity from the pseudocode formulation, and secondly, to prepare for the complexity analysis in Subsection \ref{subsec3P2}. We also include a few comments on the author's GAP implementations of these algorithms where appropriate.

In general, for computing the order of an invertible matrix $M$ over a finite prime field $\IF_p$, proceed as in \cite{CL97a}. Note that this requires us to factor integers of the form $p^d-1$ with $d$ at most the dimension of $M$; in the probabilistic version of Algorithm \ref{algo1}, with whose complexity we are concerned in Theorem \ref{mainTheo2}, we will assume that a combination of the AKS algorithm (see \cite{AKS04a}), for deterministic primality testing in polynomial time, and the general number field sieve (see \cite{Pom96a}), for Las Vegas factorization of non-prime integers (or rather, only of integers that are not prime powers, but that is not a problem) in subexponential expected time, is used for this, whereas in the deterministic version, where we are not concerned with complexity issues, we may use any of the known deterministic integer factorization algorithms (the one from \cite{Hit18a} has the currently best worst-case complexity).

It remains to discuss how to compute, for a given prime $p$, positive integer $d$, invertible $(d\times d)$-matrix $M$ over $\IF_p$ and vector $t\in\IF_p^d$, the cycle length of the zero vector of $V=\IF_p^d$ under the bijective affine transformation $\A_{t,M}:V\rightarrow V,v\mapsto Mv+t$.

\begin{enumerate}
\item Use any of the available polynomial-time algorithms to compute a basis change matrix $T$ such that $T^{-1}MT$ is in rational canonical form. One example of such an algorithm is the basic one described in \cite[Section 12.2, pp.~481f.]{DF04a} (in the author's GAP implementations of Algorithms \ref{algo1} and \ref{algo2}, an implementation of that algorithm by Hulpke is used); a more efficient variant is discussed in \cite[Chapter 9]{Sto00a}.
\item The cycle length of the zero vector under $\A_{t,M}$ then is the same as its cycle length under $\A_{T^{-1}t,T^{-1}MT}$, which in turn is the least common multiple of the cycle lengths of the zero vector in the subspaces of $V$ corresponding to the companion matrix blocks of $T^{-1}MT$ under the respective restrictions of $\A_{T^{-1}t,T^{-1}MT}$. It therefore suffices to consider the case where $M$ is the companion matrix of a monic polynomial $P\in\IF_p[x]$, with $x$ being a variable.
\item In that case, the action of $M$ on $\IF_p^d$ is isomorphic to the one of the multiplication by $x$ modulo $P$ on the quotient algebra $\IF_p[x]/(P)$, with an isomorphism given by the map $\pi_P:(a_1,\ldots,a_d)\mapsto\sum_{\ell=0}^{d-1}{a_{\ell+1}x^{\ell}}+(P)$. Hence we will actually compute the cycle length of the zero element $(P)\in\IF_p[x]/(P)$ under the bijective affine map $F+(P)\mapsto \pi_P(t)+xF+(P)$ on $\IF_p[x]/(P)$.
\item To this end, factor $P$ in $\IF_p[x]$, yielding $P=\prod_{j=1}^r{Q_j^{m_j}}$, where the $Q_j\in\IF_p[x]$ are irreducible and pairwise distinct. For the sake of unambiguousness and to make the algorithms deterministic, say we use Berlekamp's algorithm from \cite{Ber67a} for this; for the later theoretical complexity analysis, we will instead assume that Berlekamp's Las Vegas factorization algorithm with expected running time polynomial in the input size (see \cite{Ber70a} and \cite[Section 3]{GP01a}) is used. Now set $R_j:=Q_j^{m_j}$ and, writing $t=(t_1,\ldots,t_d)$, set $S:=\sum_{\ell=0}^{d-1}{t_{\ell+1}x^{\ell}}\in\IF_p[x]$. Then by the Chinese Remainder Theorem, the cycle length of the zero vector under $A$ is the least common multiple of the cycle lengths of the zero vectors in the quotients $\IF_p[x]/(R_j)$ under $F+(R_j)\mapsto S+xF+(R_j)$, for $j=1,\ldots,r$. It therefore suffices to consider the case where $P=Q^m$ is a power of an irreducible polynomial $Q\in\IF_p[x]$.
\item In that case, if $S=0$, then the cycle length is $1$, so assume that $S\not=0$. Consider the affine map $A:F\mapsto x\cdot F+S$ on $\IF_p[x]$. By induction on $n\in\IN$, it is easy to show that $A^n(0)=S\cdot(1+x+x^2+\cdots+x^{n-1})$, so the cycle length on the quotient algebra $\IF_p[x]/(Q^m)$ in which we are interested is the smallest $n\in\IN^+$ such that
\[
S(1+x+x^2+\cdots+x^{n-1})\equiv 0\Mod{Q^m},
\]
which, by multiplying both congruence sides by $x-1$, is equivalent to
\begin{equation}\label{congruence}
S\cdot(x^n-1)\equiv 0\Mod{Q^{m+\delta_{Q,x-1}}}.
\end{equation}
Set $v:=\nu_Q(S)$, the $Q$-adic valuation of $S$ (see Subsection \ref{subsec1P3}), and note that $0\leq v<m$. Formula (\ref{congruence}) is equivalent to
\[
x^n-1\equiv 0\Mod{Q^{m-v+\delta_{Q,x-1}}},
\]
and the smallest such $n\in\IN^+$ is by definition just the order of the polynomial $Q^{m-v+\delta_{Q,x-1}}$, which we compute following \cite[Theorems 3.8 and 3.9 and p.~87, below Theorem 3.11]{LN97a}; note that this requires us to factor the integer $p^{\deg{Q}}-1$, which we do as described above.
\end{enumerate}

We note that similarly to the approach in \cite{CL97a}, one could speed Algorithm \ref{algo2} up a bit by not fully factoring the polynomial $P$ in point (4), but only working with the squarefree factorization of $P$. This does, however, \emph{not} improve the asymptotic complexity of the Las Vegas version of Algorithm \ref{algo2}, as one still needs to carry out the mentioned integer factorizations. In the author's GAP implementation of Algorithm \ref{algo2}, the full factorization of $P$ is used.

\subsection{Automorphisms restricted to cosets}\label{subsec2P2}

The following result, which is essentially \cite[Lemma 2.1.3(1)]{Bor16a}, will be used in the correctness proofs for both algorithms:

\begin{lemmma}\label{transferLem}
Let $G$ be a finite group, $t\in G$, $\alpha$ an automorphism of $G$, $H$ an $\alpha$-invariant subgroup of $G$, and set $A:=\A_{t,\alpha}$. Then if $xH$ is a left coset of $H$ in $G$ such that $A[xH]=xH$, and writing $A(x)=xh_0$ with $h_0\in H$, then for all $h\in H$, $A(xh)=xh_0\alpha(h)=x\A_{h_0,\alpha_{\mid H}}(h)$. In particular, the cycle length of $x$ under $A$ then equals the cycle length of $1_G=1_H$ under the bijective affine map $\A_{h_0,\alpha_{\mid H}}$ on $H$.\qed
\end{lemmma}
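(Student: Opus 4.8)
The plan is to verify the displayed chain of equalities by a one-line computation with the definition of $\A_{t,\alpha}$, and then to bootstrap it to the claim about cycle lengths via an easy induction on the number of iterations of $A$.

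First I would record that $\alpha_{\mid H}$ is an automorphism of $H$: since $\alpha$ is an automorphism of $G$ and $H$ is $\alpha$-invariant, $\alpha_{\mid H}$ is an injective endomorphism of $H$, and (by finiteness of $G$, or equally by applying the same observation to $\alpha^{-1}$) it is onto $H$; in particular $\A_{h_0,\alpha_{\mid H}}\colon H\to H$ is a well-defined bijective affine map as soon as $h_0\in H$. The core identity is then immediate: for $h\in H$,
\[
A(xh)=t\alpha(xh)=t\alpha(x)\alpha(h)=\bigl(t\alpha(x)\bigr)\alpha(h)=A(x)\,\alpha(h)=xh_0\alpha(h),
\]
where the only inputs beyond the definition of $A$ and the homomorphism property of $\alpha$ are the hypothesis $A(x)=xh_0$. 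Since $h_0\in H$ and $\alpha(h)\in H$, we have $h_0\alpha(h)=h_0\alpha_{\mid H}(h)=\A_{h_0,\alpha_{\mid H}}(h)$, so $A(xh)=x\A_{h_0,\alpha_{\mid H}}(h)$ as claimed; letting $h$ range over $H$ also re-derives $A[xH]=xh_0\alpha(H)=xH$.

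For the \enquote{in particular} part I would prove by induction on $n\in\IN$ that $A^n(xh)=x\,\A_{h_0,\alpha_{\mid H}}^n(h)$ for every $h\in H$. The case $n=0$ is trivial, and the inductive step applies the identity above to the element $x\cdot\A_{h_0,\alpha_{\mid H}}^n(h)\in xH$, which is legitimate because $\A_{h_0,\alpha_{\mid H}}^n(h)\in H$. Specializing to $h=1_H$ and using $x=x\cdot 1_H$ yields $A^n(x)=x\,\A_{h_0,\alpha_{\mid H}}^n(1_H)$; since left translation by $x$ is injective, $A^n(x)=x$ holds precisely when $\A_{h_0,\alpha_{\mid H}}^n(1_H)=1_H$, so the cycle length of $x$ under $A$ coincides with that of $1_H$ under $\A_{h_0,\alpha_{\mid H}}$.

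I expect essentially no genuine obstacle here: the statement is purely formal (it is quoted from \cite[Lemma 2.1.3(1)]{Bor16a}, and the $\qed$ in the excerpt already flags it as immediate). The only points that deserve a moment's attention are keeping the left/right placement of factors straight — the whole argument rests on rewriting the \emph{left} part $t\alpha(x)$ as $xh_0$ while the factor $\alpha(h)$ remains on the right — and checking that $\alpha_{\mid H}$ is an automorphism of $H$ so that the affine map $\A_{h_0,\alpha_{\mid H}}$ on $H$ is meaningful. Accordingly I would keep the written proof to these few lines.
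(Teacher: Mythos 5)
Your proof is correct and is exactly the routine verification the paper has in mind: the paper states this lemma with a \qed, deferring to \cite[Lemma 2.1.3(1)]{Bor16a} rather than writing out the computation. Your chain of equalities $A(xh)=t\alpha(x)\alpha(h)=xh_0\alpha(h)$ and the induction giving $A^n(xh)=x\A_{h_0,\alpha_{\mid H}}^n(h)$ are precisely the intended argument, with the left/right bookkeeping handled correctly.
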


In other words, in the setting of Lemma \ref{transferLem}, the action of $A$ on the coset $xH$ is isomorphic to the action of $\A_{h_0,\alpha_{\mid H}}$ on $H$. As noted in \cite[Remark 2.1.4(1)]{Bor16a} already, even if $t=1$, i.e., if $A=\alpha$ is an automorphism of $G$, the coset representative $x$ cannot necessarily be chosen such that $h_0=x^{-1}\alpha(x)=1$, so the affine map on $H$ describing the action of $\alpha$ on $xH$ is in general still only an affine map (not an automorphism) on $H$, which is why it is more natural to work with affine maps in situations where Lemma \ref{transferLem} is used.

We also note the following consequence of Lemma \ref{transferLem} (see also \cite[proof of Theorem 2]{Hor74a}), which will be used in the correctness proof of Algorithm \ref{algo1}:

\begin{lemmma}\label{idIdLem}
Let $G$ be a finite group, $\alpha$ an automorphism of $G$, $p$ a prime, $N$ an $\alpha$-invariant normal subgroup of $G$ with $\Exp(N)=p$. Assume that the restriction $\alpha_{\mid N}$ and the automorphism $\tilde{\alpha}$ of $G/N$ induced by $\alpha$ are the identity on $N$ and $G/N$ respectively. Then either $\alpha=\id_G$ or $\alpha$ is of order $p$.
\end{lemmma}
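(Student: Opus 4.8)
The plan is to analyze the commutator-type behaviour of $\alpha$ relative to the two quotient actions and use Lemma \ref{transferLem}. First I would define, for each $g\in G$, the element $c(g):=g^{-1}\alpha(g)$. Since the induced automorphism $\tilde\alpha$ on $G/N$ is trivial, we have $\alpha(g)N=gN$, so $c(g)\in N$ for every $g\in G$; thus $\alpha=\A_{1,\alpha}$ acts on each coset $gN$ as described by Lemma \ref{transferLem}, with the relevant affine map on $N$ being $\A_{c(g),\alpha_{\mid N}}=\A_{c(g),\id_N}$, i.e. simply left translation by $c(g)$ on $N$. Because $\exp(N)=p$, the cycle length of $1_N$ under translation by $c(g)$ is $1$ if $c(g)=1$ and $p$ if $c(g)\neq 1$. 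Hence every cycle of $\alpha$ on $G$ has length $1$ or $p$, which already forces $\ord(\alpha)\in\{1,p\}$ (the order is the lcm of the cycle lengths), giving the desired dichotomy: either $\alpha=\id_G$, or $\alpha$ has some cycle of length $p$ and therefore $\ord(\alpha)=p$.

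Strictly, one should double-check that Lemma \ref{transferLem} applies: $N$ is $\alpha$-invariant by hypothesis, and $\alpha[gN]=\alpha(g)N=gN$ since $\tilde\alpha=\id_{G/N}$, so the coset $gN$ is indeed fixed setwise by $A=\alpha$, and writing $\alpha(g)=g\cdot c(g)$ with $c(g)\in N$ matches the \enquote{$A(x)=xh_0$} form of the lemma with $h_0=c(g)$. Then Lemma \ref{transferLem} says the cycle length of $g$ under $\alpha$ equals the cycle length of $1_N$ under $\A_{c(g),\alpha_{\mid N}}$; and since $\alpha_{\mid N}=\id_N$ by hypothesis, this map is $h\mapsto c(g)\cdot h$, whose iterates send $1_N$ to $c(g)^k$, so the cycle length is $\ord(c(g))$, which divides $\exp(N)=p$ and hence is $1$ or $p$.

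Finally I would assemble the conclusion: if $c(g)=1$ for all $g\in G$ then $\alpha(g)=g$ for all $g$, i.e. $\alpha=\id_G$; otherwise there exists $g$ with $c(g)\neq 1$, so $g$ lies on a cycle of length $p$ under $\alpha$, whence $p\mid\ord(\alpha)$, while on the other hand $\ord(\alpha)=\lcm$ of all cycle lengths $\in\{1,p\}$ equals $p$. This is exactly the asserted statement.

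I do not anticipate a genuine obstacle here; the only point requiring a little care is the bookkeeping around Lemma \ref{transferLem} (keeping the translate $h_0=c(g)$ straight and noting that $\alpha_{\mid N}=\id_N$ collapses the affine map to a pure translation), together with the elementary observation that $\exp(N)=p$ bounds the order of $c(g)$. One could alternatively phrase the proof without invoking the cycle-length language, by directly computing $\alpha^p(g)$: a short induction gives $\alpha^k(g)=g\cdot c(g)^{?}$ up to the abelianness of $N$ and the triviality of $\alpha_{\mid N}$, yielding $\alpha^p(g)=g$ for all $g$ and hence $\ord(\alpha)\mid p$; but the cycle-length route via Lemma \ref{transferLem} is cleaner and dovetails with how the lemma is used elsewhere in Section \ref{sec2}.
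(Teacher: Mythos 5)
Your proof is correct and follows exactly the paper's own argument: both use Lemma \ref{transferLem} to identify the action of $\alpha$ on each coset $gN$ with left translation by $g^{-1}\alpha(g)\in N$, and then invoke $\exp(N)=p$ to conclude every cycle has length $1$ or $p$. Your write-up merely makes the bookkeeping (the element $c(g)$ and the cycle-length computation) more explicit than the paper does.
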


\begin{proof}
By the assumption that $\tilde{\alpha}=\id_{G/N}$, $\alpha$ restricts to a permutation on each coset $xN$ of $N$ in $G$, and by the assumption that $\alpha_{\mid N}=\id_N$ and Lemma \ref{transferLem}, this permutation on $xN$ is isomorphic to the left translation by a fixed element on $N$. Hence, as $N$ has exponent $p$, each such restriction of $\alpha$ is either trivial or has order $p$, and so $\alpha$ as a whole is either trivial or has order $p$.
\end{proof}

\subsection{Correctness of Algorithm \ref{algo1}}\label{subsec2P3}

As in the description of Algorithm \ref{algo1}, let $G=G_1>G_2>\cdots>G_r>G_{r+1}=\{1\}$ denote the LG-series of $G$, and for $i=1,\ldots,r$, denote by $V_i:=G_i/G_{i+1}$ the $i$-th factor in the series, a finite elementary abelian group. Then $\ell_i$, defined in Step 3, is the length of the pcgs/basis of $V_i$ induced by $\vec{g}$, so $\ell_i$ is the dimension of the vector space $V_i$, and $p_i$, defined in Step 4, is the exponent of $V_i$ (i.e., the characteristic/cardinality of the associated finite prime field).

The matrix $M_i$, defined in Step 6, represents the automorphism $\alpha_i$ of $V_i$ induced by $\alpha$ with respect to the $\IF_{p_i}$-basis
\[
g_{L(i-1)+1}G_{i+1},g_{L(i-1)+2}G_{i+1},\ldots,g_{L(i)}G_{i+1}
\]
of $V_i$. For each $i=1,\ldots,r$, the function $\Aut(G)\rightarrow\Aut(V_i)$ mapping an automorphism of $G$ to the corresponding induced automorphism of $V_i$ is a group homomorphism, and so each $o_i$ (defined in Step 7) divides $\ord(\alpha)$, whence $o$ (defined in Step 8) also divides $\ord(\alpha)$, and thus $\ord(\alpha)=o\cdot\ord(\alpha^o)$.

The last for-loop (ranging from Steps 10--13) serves to compute $\ord(\alpha^o)$ (with $o$ as defined in Step 8) step by step, updating the values of the variables $o$ and $\beta$ along the way so that at the end of the loop, the value of $o$ will be $\ord(\alpha)$. At the beginning of the $i$-th step of the loop, the value of $\beta$ is $\alpha^o$, and the value of $o$ is a divisor of $\ord(\alpha)$ such that the automorphisms of $G/G_{i+1}$ and $G_{i+1}/G_{i+2}$ respectively which are induced by $\beta=\alpha^o$ are both trivial (actually, by construction, for each $j\in\{1,\ldots,r\}$, the automorphism of $G_j/G_{j+1}$ induced by $\beta$ is trivial). Then precisely one of the following two cases occurs:

\begin{itemize}
\item $\beta$ is also trivial modulo $G_{i+2}$, so that neither of the two variables needs to be updated for the next loop step.
\item $\beta$ is nontrivial modulo $G_{i+2}$. In that case, by Lemma \ref{idIdLem}, applied to the group $G/G_{i+2}$, the order of the automorphism of $G/G_{i+2}$ induced by $\beta$ is equal to $\Exp(G_{i+1}/G_{i+2})=\Exp(V_{i+1})=p_{i+1}$, so that for the next loop step, $o$ must be replaced by $o\cdot p_{i+1}$ and $\beta$ by $\beta^{p_{i+1}}$.
\end{itemize}

The if clause in Step 11 tests whether the second case occurs and if so, updates $o$ and $\beta$ as described above. Note that by assumption, each of the generators $g_{L(i)+1},g_{L(i)+2},\ldots,g_{L(i+1)}$ is fixed modulo $G_{i+2}$ by $\beta$, which explains the range for $j$ in the if clause, and that for $k=1,2,\ldots,L(i)$, $\exp_{\vec{g}}(\beta(g_j),k)$ equals $\delta_{k,j}$ by assumption, whereas the values of $\exp_{\vec{g}}(\beta(g_j),k)$ for $k>L(i+1)$ do not matter for the question whether $\beta$ is trivial modulo $G_{i+2}$, which explains the range for $k$.

\subsection{Correctness of Algorithm \ref{algo2}}\label{subsec2P4}

We use the notation from the first paragraph of the correctness proof for Algorithm \ref{algo1} and note that the statements on $\ell_i$ and $p_i$ from there apply here as well. Throughout, we assume that the variable $A$ stands for the bijective affine map $\A_{t,\alpha}$ on $G$ where $t$ and $\alpha$ are as in the input. The values of the variables $t$ and $\alpha$ themselves will, however, be changed along the way.

At the beginning of the $i$-th step of the last for-loop (Steps 6--11), $\lambda$ is the smallest divisor of the cycle length of $g$ under $A$ such that $g$ is a fixed point of $A^{\lambda}$ modulo $G_i$ (i.e., $A^{\lambda}(g)=g\cdot g_i$ for some $g_i\in G_i$), and $t$ and $\alpha$ are such that $A^{\lambda}=\A_{t,\alpha}$. At the end of the loop step, we want the analogous situation with $i$ replaced by $i+1$; that is, we want to find the smallest $\lambda_i\in\IN^+$ such that $A^{\lambda\cdot\lambda_i}(g)=g\cdot g_{i+1}$ for some $g_{i+1}\in G_{i+1}$.

To this end, we use Lemma \ref{transferLem} to translate the action of $A^{\lambda}=\A_{t,\alpha}$ on $gG_i$ into the action of $B_i:=\A_{g_i,\alpha_{\mid G_i}}$ on $G_i$. Note that since the isomorphism transforming the two actions into each other is just a left translation by a fixed element in both directions (namely by $g$ respectively $g^{-1}$), our problem is equivalent to finding the smallest $\lambda_i\in\IN^+$ such that the cycle of $1\in G_i$ under $B_i$ is of length $\lambda_i$ modulo $G_{i+1}$, so we study the induced action of $B_i$ on $V_i=G_i/G_{i+1}$.

As in Algorithm \ref{algo1}, the matrix $M_i$ from Step 7 represents the induced action of $\alpha$ on $V_i$, and since $g_i=g^{-1}\cdot\A_{t,\alpha}(g)$ by definition, the vector $u_i$ from Step 8 is the projection of $g_i$ to $V_i$, so the number $\lambda_i$, which we want to compute, is just the cycle length of $0\in\IF_{p_i}^{\ell_i}$ under $v\mapsto M_iv+u_i$, whence Step 9.

So at the end of the last loop step (and thus in Step 12), $\lambda$ divides the cycle length of $g$ under $A$ while at the same time, $g$ is a fixed point of $A^{\lambda}$ modulo $G_{r+1}=\{1\}$, hence a fixed point, period. Therefore, $\lambda$ is indeed the cycle length of $g$ under $A$ and hence the correct output.

\section{Complexity analysis}\label{sec3}

\subsection{Complexity analysis for Theorem \ref{mainTheo3}}\label{subsec3P1}

The most demanding part of the complexity analysis of both our algorithms lies in Step 1, the passage to a \enquote{nicer} presentation of $G$. An algorithm for computing a pcgs refining the LG-series is described in \cite[Subsection 3.1]{CEL04a}. We will show that following that approach with some modifications yields the validity of Theorem \ref{mainTheo3}.

We will require several lemmas concerning the theoretical complexity of basic problems, such as computing powers of elements and of automorphisms of finite polycyclic groups.

\begin{lemmma}\label{squareMultiplyLem}
For every refined consistent polycyclic presentation $P=\langle X\mid R\rangle$ representing the (finite solvable) group $G$, the following hold:

\begin{enumerate}
\item For every $g\in G$ and all $e\in\IZ$, (the $X$-exponent vector of) the power $g^e$ can be computed from (the one of) $g$ using $O(d(P)+\log{|e|})$ multiplications of elements in $X$-collected form and $O(d(P)e(P)+\log{|e|})$ bit operations spent outside group element multiplications.
\item For all automorphisms $\alpha_1,\alpha_2\in\Aut(G)$ (given through their $X$-exponent matrices), the ($X$-exponent matrix of the) composition $\alpha_1\circ\alpha_2$ can be computed using $O(d(P)^3+d(P)^2e(P))$ multiplications of elements in $X$-collected form and $O(d(P)^3e(P))$ bit operations spent outside group element multiplications.
\item For every $\alpha\in\Aut(G)$ (given through its $X$-exponent matrix) and all $e\in\IN^+$, the ($X$-exponent matrix of the) iterate $\alpha^e$ can be computed using $O(\log{e}\cdot(d(P)^3+d(P)^2e(P)))$ multiplications of elements in $X$-collected form as well as $O(\log{e}\cdot d(P)^3e(P))$ bit operations spent outside group element multiplications.
\end{enumerate}
\end{lemmma}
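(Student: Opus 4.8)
The plan is to handle the three parts in sequence, each building on the previous, treating "multiplication of elements in normal form" as a black box costing one unit and carefully accounting for everything else in bit operations. For part (1), I would use the standard square-and-multiply (binary exponentiation) recurrence: write $|e|$ in binary, and compute $g^{2^k}$ by repeated squaring while accumulating the product of the appropriate powers. This uses $\O(\log|e|)$ group multiplications. The extra $\O(d(P))$ multiplications and $\O(d(P)e(P))$ non-multiplication bit operations come from the bookkeeping of handling exponent vectors of length $d(P)$ whose entries are bounded by the relative orders (each of bit-length $\le e(P)$); in particular, inverting $g$ when $e<0$ costs $\O(d(P))$ group operations via the usual normal-form inversion, and copying/comparing exponent vectors costs $\O(d(P)e(P))$ bit operations. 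I would state this and note that it is routine.

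For part (2), the key observation is that if $\alpha_2$ is given by the normal forms $\alpha_2(x_1),\dots,\alpha_2(x_n)$ and $\alpha_1$ likewise, then $(\alpha_1\circ\alpha_2)(x_j) = \alpha_1(\alpha_2(x_j))$, and $\alpha_2(x_j)$ is a normal-form word $\prod_i x_i^{c_{ij}}$ with $0\le c_{ij} < \ord_{\text{rel}}(x_i)$, so $\alpha_1(\alpha_2(x_j)) = \prod_i \alpha_1(x_i)^{c_{ij}}$. Thus for each of the $n = d(P)$ generators $x_j$, one evaluates a product of $d(P)$ many powers $\alpha_1(x_i)^{c_{ij}}$. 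By part (1), each such power costs $\O(d(P)+\log c_{ij}) = \O(d(P)+e(P))$ group multiplications and $\O(d(P)e(P))$ bit operations outside multiplications, and then the $d(P)$ results must be multiplied together, a further $\O(d(P))$ group multiplications. Summing over the $d(P)$ inner powers and then over the $d(P)$ outer generators yields $\O(d(P)\cdot d(P)\cdot(d(P)+e(P))) = \O(d(P)^3 + d(P)^2 e(P))$ group multiplications, and $\O(d(P)\cdot d(P)\cdot d(P)e(P)) = \O(d(P)^3 e(P))$ bit operations outside multiplications, as claimed. I would be a little careful to absorb the $\O(\log c_{ij})$ terms: since $c_{ij} < p_i \le 2^{e(P)}$, one has $\log c_{ij} \le e(P) \le d(P)$ by the standing assumption $e(P)\le l(P)$ and — more to the point — $e(P)$ is already accounted for, so the dominant count is $d(P)+e(P)$ per power, and I should make sure the $d(P)^2 e(P)$ term is kept rather than swallowed.

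Part (3) is then immediate by binary exponentiation at the level of automorphisms: $\alpha^e$ is computed from $\alpha$ by $\O(\log e)$ compositions, each costing what part (2) gives, so the totals are $\O(\log e)$ times the bounds in (2). The main (minor) obstacle across all three parts is the bookkeeping discipline of separating the two cost measures — group multiplications versus other bit operations — and making sure the "other" operations (exponent-vector arithmetic, comparisons against relative orders, conversions) are correctly bounded by $\O(d(P)e(P))$ per group element touched and that the binary-expansion lengths $\log|e|$, $\log e$ are tracked as additive terms rather than being folded into the polynomial-in-$d(P)$ part. None of this is deep; the content is entirely in the reduction $\alpha_1(\prod_i x_i^{c_{ij}}) = \prod_i \alpha_1(x_i)^{c_{ij}}$ and in invoking part (1) for the inner powers.
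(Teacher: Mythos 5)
Your proposal is correct and follows essentially the same route as the paper's proof: square-and-multiply plus normal-form inversion for (1), the reduction $(\alpha_1\circ\alpha_2)(x_j)=\prod_i\alpha_1(x_i)^{c_{ij}}$ combined with (1) for (2), and square-and-multiply at the automorphism level via (2) for (3), with the same cost accounting throughout.
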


\begin{proof}
For statement (1): If $e\geq 0$, use a square-and-multiply approach. First, compute and store $g,g^2,g^4,\ldots,g^{2^{\lfloor\log_2{e}\rfloor}}$; in each iteration step, the group multiplication algorithm is called once for squaring and one moves a marker along the given binary digit expansion of $e$ one step further so that one knows when to stop the iterated squaring. This requires $O(\log{e})$ multiplications and $O(\log{e})$ bit operations for other purposes (moving the marker). Afterward, read the digits of $e$ one after the other and multiply the corresponding powers of $g$ computed before, which also requires $O(\log{e})$ multiplications and $O(\log{e})$ bit operations outside multiplication.

If $e<0$, then first compute $g^{-1}$. Say the $X$-depth of $g$ (see \cite[Definition 8.5(a)]{HEO05a}) is $d$, and say the $X$-collected form of $g$ starts with the power $x_d^{e_d}$, where $e_d\in\{1,\ldots,p_d-1\}$ and $p_d$ is the relative order of $x_d$. Then the $X$-collected form of $g^{-1}$ starts with the power $x_d^{p_d-e_d}$, and $g\cdot x_d^{p_d-e_d}$ has depth strictly larger than $d$. It follows that the $X$-collected form of $g^{-1}$ can be computed in a recursive manner, using $O(d(P))$ multiplications of elements in $X$-collected form and $O(d(P)e(P))$ other bit operations (from $O(d(P))$ subtractions of positive integers with $O(e(P))$ binary digits each). After this, raise $g^{-1}$ to the $(-e)$-th power as in the previous paragraph, requiring another $O(\log{|e|})$ multiplications and other-purpose bit operations.

For statement (2): By assumption, we can read off $\alpha_2(x_i)=x_1^{e(i,1)}\cdots x_{d(P)}^{e(i,d(P))}$ in $X$-collected form directly from the input, for each $i=1,\ldots,d(P)$. Then
\[
(\alpha_1\circ\alpha_2)(x_i)=\alpha_1(x_1)^{e(i,1)}\cdots\alpha_1(x_{d(P)})^{e(i,d(P))},
\]
the $X$-collected form of which can be computed in view of statement (1) using
\[
O(d(P)\cdot(d(P)+\max\{\log{e(i,1)},\ldots,\log{e(i,d(P))}\})+d(P))\subseteq O(d(P)^2+d(P)e(P))
\]
multiplications and
\[
O(d(P)\cdot d(P)e(P))=O(d(P)^2e(P))
\]
other-purpose bit operations.

For statement (3): Similar to statement (1), using statement (2) for each squaring step and for the subsequent composition of suitable powers of the form $\alpha^{2^f}$.
\end{proof}

The next lemma discusses the complexity of transforming the polycyclic presentation and automorphism/group elements under an \enquote{elementary transformation step} of the associated pcgs; Step 1 in Algorithms \ref{algo1} and \ref{algo2} essentially consists of a sequence of applications of such elementary steps, similarly to \cite[Subsection 3.1]{CEL04a}.

\begin{lemmma}\label{elementaryTransfoLem}
For every refined consistent polycyclic presentation $P=\langle X\mid R\rangle$, representing the finite group $G$, with $X=(x_1,\ldots,x_n)$, and every $g\in G\setminus\{1\}$, say of $X$-depth $d$ (in the sense of \cite[Definition 8.5(a)]{HEO05a}), the following hold:

\begin{enumerate}
\item For every $h\in G$, the $X_g$-exponent vector of $h$, where $X_g:=(y_1,\ldots,y_n)$ with $y_i=x_i$ if $i\not=d$ and $y_d=g$, can be computed using
\[
O(d(P)^2+d(P)e(P))
\]
multiplications of elements in $X$-collected form and $O(d(P)^2e(P)+d(P)e(P)^3)$ bit operations spent for other purposes.
\item The refined consistent polycyclic presentation $P'$ of $G$ associated with $X_g$ can be computed using
\[
O(d(P)^4+d(P)^3e(P))
\]
multiplications of elements in $X$-collected form and $O(d(P)^4e(P)+d(P)^3e(P)^3)$ bit operations spent for other purposes.
\item A pc group isomorphism from $P$ to $P'$ (in the sense of Subsection \ref{subsec1P3}) can be computed using
\[
O(d(P)^2+d(P)e(P))
\]
multiplications of elements in $X$-collected form and $O(d(P)^2e(P)+d(P)e(P)^3)$ bit operations spent for other purposes.
\end{enumerate}
\end{lemmma}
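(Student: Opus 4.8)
The plan is to reduce all three parts to one change-of-generators routine for group elements, namely part~(1), and to control its cost through Lemma~\ref{squareMultiplyLem}. I would start with a structural observation. Write $g$ in $X$-normal form as $g=x_d^{e_d}x_{d+1}^{e_{d+1}}\cdots x_n^{e_n}$ with $e_d\in\{1,\ldots,p_d-1\}$, where $p_i$ denotes the (prime) relative order of $x_i$; as $p_d$ is prime, $\gcd(e_d,p_d)=1$. Setting $G_i:=\langle x_i,\ldots,x_n\rangle$, one checks $\langle y_i,\ldots,y_n\rangle=G_i$ for every $i$: for $i>d$ this is immediate, and for $i\le d$ the group $\langle y_i,\ldots,y_n\rangle$ contains $y_d=g$ and $G_{d+1}=\langle x_{d+1},\ldots,x_n\rangle$, hence contains $x_d^{e_d}$ (since $g\equiv x_d^{e_d}$ modulo $G_{d+1}$) and $x_d^{p_d}\in G_{d+1}$; as $\gcd(e_d,p_d)=1$ it therefore contains $x_d$, and thus all of $G_i$, while the reverse inclusion is clear. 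Consequently $X_g$ is a pcgs of $G$ refining the \emph{same} subnormal series $G=G_1>G_2>\cdots>\{1\}$ as $X$, so the relative orders of the $y_i$ are again the primes $p_i$, the presentation $P'$ is refined, and — being read off a genuine pcgs — $P'$ is automatically consistent, so no consistency test is incurred.

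\emph{Part~(1).} Given $h$ in $X$-normal form, I compute $\coeff_{X_g}(h)$ by the standard descent through the series: put $h_{(1)}:=h$ and, for $i=1,\ldots,n$ in turn, with $h_{(i)}\in G_i$ in $X$-normal form, let $a:=\coeff_X(h_{(i)},i)$ and $c_i:=\coeff_X(y_i,i)$ (so $c_i=1$ for $i\ne d$ and $c_d=e_d$); set $b_i:=0$ if $a=0$ and $b_i:=a\,c_i^{-1}\bmod p_i$ otherwise (a modular inverse by the extended Euclidean algorithm), and then $h_{(i+1)}:=y_i^{-b_i}h_{(i)}$. The choice of $b_i$ forces $h_{(i+1)}\in G_{i+1}$, and inductively $h=y_1^{b_1}\cdots y_i^{b_i}h_{(i+1)}$, so after $i=n$ one gets $h=y_1^{b_1}\cdots y_n^{b_n}$, i.e.\ $(b_1,\ldots,b_n)=\coeff_{X_g}(h)$. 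For $i\ne d$ the factor $y_i^{-b_i}=x_i^{-b_i}$ is a single generator power and forming $x_i^{-b_i}h_{(i)}$ is merely the removal of the leading term (no collection), so genuine work occurs only at depth $i=d$, where computing $y_d^{-b_d}=g^{-b_d}$ costs $\O(d(P)+e(P))$ normal-form multiplications by Lemma~\ref{squareMultiplyLem}(1) and the product one further multiplication. Charging one power computation and one product at each depth gives $\O(d(P)(d(P)+e(P)))$ normal-form multiplications and $\O(d(P)^2e(P)+d(P)e(P)^2)$ bit operations for other purposes (exponent-vector arithmetic and the modular inverses), which is well within the bound asserted in~(1).

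\emph{Parts~(2) and~(3).} The presentation $P'$ has, for each $i$, the power relation $y_i^{p_i}=w'_{ii}$ and, for each $i<j$, the conjugacy relation $y_j^{y_i}=w'_{ij}$ — altogether $\O(r(P))=\O(d(P)^2)$ relations — whose right-hand sides are the $X_g$-normal forms of the group elements $y_i^{p_i}$, resp.\ $y_i^{-1}y_jy_i$. Each such element is first written in $X$-normal form cheaply: for index pairs avoiding $d$ it is literally a relation of $P$ (or a trivial commutation), while the $\O(d(P))$ relations involving $y_d=g$ need at most one power $g^{p_d}$ or one conjugate $g^{\pm1}x_jg^{\mp1}$, costing $\O(d(P)+e(P))$ normal-form multiplications by Lemma~\ref{squareMultiplyLem}(1). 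Converting each of the $\O(d(P)^2)$ right-hand sides to $X_g$-normal form by the routine of part~(1) then yields~(2), the cost being $\O(r(P))$ times that of~(1), as the stated bounds reflect. For~(3): the pc group isomorphism $P\to P'$ is the one induced by $\id_G$, hence is given by the $X_g$-normal forms of $x_1,\ldots,x_n$ together with the $X$-normal forms of $y_1,\ldots,y_n$; for $i\ne d$ both are the exponent vector with a single $1$ in position $i$, the $X$-normal form of $y_d=g$ is part of the input, and the $X_g$-normal form of $x_d$ is obtained by a single call to part~(1). Thus~(3) costs $\O(1)$ calls to~(1) plus $\O(d(P)^2e(P))$ bit operations to record the trivial entries, within the bound of~(1).

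\emph{Where the care goes.} There is no deep obstacle; this is a bookkeeping lemma destined to be invoked repeatedly inside the complexity analysis of the LG-refinement procedure (cf.\ Theorem~\ref{mainTheo3}). The two points that must be handled carefully are the structural claim that $X$ and $X_g$ generate the same subnormal series — so that $P'$ is automatically a refined \emph{consistent} presentation with unchanged relative orders and no consistency check is needed — and, in part~(1), the clean separation at each depth of the single ``real'' normal-form multiplication from the cost-free leading-term removals, so that the count of normal-form multiplications (which is what carries the $\Coll(P)$ factor in the applications) is charged correctly.
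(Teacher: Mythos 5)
Your proof is correct, and your parts (2) and (3) follow essentially the same reduction as the paper's: each of the $\O(d(P)^2)$ relation right-hand sides is first put into $X$-normal form cheaply (only the $\O(d(P))$ relations touching depth $d$ need actual powers/conjugates of $g$) and then converted via part (1), and the isomorphism $P\to P'$ needs only the $X_g$-normal form of $x_d$. Where you genuinely diverge is in part (1). The paper works \emph{bottom-up}: it computes $l\equiv e_d^{-1}\Mod{p_d}$, writes $x_d=g^l t_d$ with $t_d\in\langle x_{d+1},\ldots,x_n\rangle$, builds an explicit multiplication formula for $X_g$-normal forms inside $\langle y_d,\ldots,y_n\rangle$ (involving $u=g^{p_d}$ and the conjugation automorphism by $g^l$), and obtains the $X_g$-normal form of $x_d^{f_d}$ by square-and-multiply over that bespoke multiplication; that is what produces the $\O(d(P)^3e(P)^2+d(P)^2e(P)^3)$ multiplication count in the statement. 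You instead sift $h$ \emph{top-down} through $X_g$: your structural observation that $\langle y_i,\ldots,y_n\rangle=G_i$ for every $i$ (left implicit in the paper) guarantees the relative orders are unchanged and that peeling off $y_i^{b_i}$ with $b_i=\coeff_X(h_{(i)},i)\cdot c_i^{-1}\bmod p_i$ costs nothing except at depth $d$, where one power $g^{-b_d}$ and one product over $P$ suffice. This is in effect the {\sc Sift}/{\sc ConstructiveMembershipTest} idea that the paper itself invokes later (in the proof of Lemma \ref{modifyPcgsLem}), and it yields $\O(d(P)+e(P))$ normal-form multiplications for part (1) rather than the paper's bound, so your argument is strictly stronger while still implying the stated estimates (and hence also tighter intermediate bounds in part (2), which you correctly cap at $\O(d(P)^2)$ calls to part (1)). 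The only nitpick: your other-purpose bit-operation count for (1) should also budget the extended Euclidean computation of $e_d^{-1}\bmod p_d$ (the paper charges $\O(e(P)^3)$ for this), but that term is absorbed by the asserted $\O(d(P)^3e(P)^3)$ in any case.
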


\begin{proof}
For statement (1): First, observe that for each positive integer $m$ and each $a\in\{1,\ldots,m\}$ such that $\gcd(a,m)=1$, the \emph{inverse of $a$ modulo $m$}, i.e., the unique $b\in\{1,\ldots,m\}$ such that $a\cdot b\equiv 1\Mod{m}$, can be computed via the extended Euclidean algorithm using $O((\log{m})^3)$ bit operations, as the algorithm consists of
\begin{itemize}
\item $O(\log{m})$ integer divisions with remainder, of numbers with $O(\log{m})$ binary digits, and each such division has complexity $O(\log{m}\log\log{m})\subseteq O((\log{m})^2)$, see \cite[Introduction]{HH19a}, as well as
\item $O(\log{m})$ backward substitutions, each of which involves $O(1)$ additions, multiplications and reductions modulo $m$ of integers with $O(\log{m})$ binary digits.
\end{itemize}
Now, consider the algorithm {\sc ConstructiveMembershipTest} from \cite[p.~296]{HEO05a}, which can be used to compute the $Y$-exponent vector of any element $h\in G$ given in $X$-collected form, where $Y$ is another pcgs of $G$, whose entries are given in $X$-collected form. We can apply this algorithm with $Y:=X_g$ to obtain the desired output. As for the complexity of this, it is easy to check using Lemma \ref{squareMultiplyLem}(1) and the complexity of the extended Euclidean algorithm that a single call of {\sc ConstructiveMembershipTest} uses $O(d(P)^2+d(P)e(P))$ multiplications of elements in $X$-collected form and $O(d(P)^2e(P)+d(P)e(P)^3)$ other-purpose bit operations, as required.

For statement (2): For each of the $O(d(P)^2)$ defining relations with respect to $X_g$, one first computes the left-hand side of the relation (either a power of a generator to a prime or a conjugate of a generator by another) in $X$-collected form, requiring $O(d(P)+e(P))$ multiplications and $O(d(P)e(P))$ other-purpose bit operations by Lemma \ref{squareMultiplyLem}(1). Then one transforms the result into $X_g$-collected form, requiring $O(d(P)^2+d(P)e(P))$ multiplications and $O(d(P)^2e(P)+d(P)e(P)^3)$ other-purpose bit operations by statement (1), to obtain the right-hand side of the defining relation. Altogether, this process requires $O(d(P)^4+d(P)^3e(P))$ multiplications and $O(d(P)^4e(P)+d(P)^3e(P)^3)$ other-purpose bit operations.

For statement (3): Note that this is tantamount to expressing each $x_i$ in terms of $y_1,\ldots,y_n$ and each $y_i$ in terms $x_1,\ldots,x_n$. The former, for which it is sufficient to express $x_d$ in terms of $y_1,\ldots,y_n$ since $x_i=y_i$ for all $i\not=d$, can be done with $O(d(P)^2+d(P)e(P))$ multiplications and $O(d(P)^2e(P)+d(P)e(P)^3)$ other-purpose bit operations by statement (1), and the latter is clear since we are assuming that $y_d=g$ is given in terms of $x_1,\ldots,x_n$ in the first place.
\end{proof}

Next, we consider the complexity of computing an induced pcgs of a subgroup $H$ from a generating subset of $H$. We follow the approach in \cite[Subsection 8.3.1]{HEO05a}.

\begin{lemmma}\label{inducedPcgsLem}
For any finite solvable group $G$, given through a refined consistent polycyclic presentation $P=\langle X\mid R\rangle$, and any subgroup $U\leq G$, given through a generating tuple $(u_1,\ldots,u_t)$, one can compute an $X$-induced pcgs for $H$ using
\[
O((t+d(P)^2)(d(P)^2+d(P)e(P)))
\]
multiplications of elements in $X$-collected form and
\[
O((t+d(P)^2)(d(P)e(P)^3+d(P)^2e(P)))
\]
bit operations spent outside group element multiplications.
\end{lemmma}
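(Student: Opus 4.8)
The plan is to follow the sift-based construction of an induced pcgs from \cite[Subsection 8.3.1]{HEO05a} and account carefully for the cost of each primitive operation, using Lemma \ref{squareMultiplyLem} for powering and multiplication. Recall that the algorithm maintains, for each depth $d \in \{1,\ldots,d(P)\}$, at most one ``pivot'' element of $U$ of depth exactly $d$ (stored in $X$-normal form), the collection of these forming the induced pcgs being built. One processes the generators $u_1,\ldots,u_t$ one at a time; for each, one \emph{sifts} it down through the current pivots by repeatedly taking the element's depth $d$, and if a pivot $v$ of depth $d$ exists, replacing the element by $v^{-k}\cdot(\text{element})$ for the appropriate $k$ that kills the leading exponent (so the depth strictly increases), and otherwise inserting the element as the new pivot of depth $d$. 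After all $t$ generators are inserted, one has a generating set of pivots; one then performs a ``closure'' phase, sifting all powers $v_d^{p_d}$ (to a prime) and all conjugates $v_d^{v_e}$ of pivots by pivots of smaller depth, inserting any new nontrivial residues as further pivots, and iterating until no new pivot appears.

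Now the bookkeeping. Each single sift step of one element costs $\O(1)$ group multiplications for the reduction $v^{-k}\cdot(\text{element})$ \emph{after} computing $v^{-k}$, which by Lemma \ref{squareMultiplyLem}(1) takes $\O(d(P)+\log k)\subseteq\O(d(P)+e(P))$ multiplications and $\O(d(P)e(P)+e(P))$ other-purpose bit operations, plus one extended-Euclid computation modulo $p_d$ costing $\O(e(P)^3)$ bit operations to find $k$; since the depth strictly increases at each step, a full sift of one element requires at most $d(P)$ such steps, hence $\O(d(P)(d(P)+e(P)))$ multiplications and $\O(d(P)(d(P)e(P)+e(P)^3))=\O(d(P)^2 e(P)+d(P)e(P)^3)$ other-purpose bit operations. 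The number of elements that ever get sifted is $t$ (the original generators) plus $\O(d(P))$ powers plus $\O(d(P)^2)$ conjugates in the closure phase; but the closure phase may need to be repeated, and each repetition is triggered by the appearance of a new pivot, of which there are at most $d(P)$ in total, so the total number of sifts is $\O(t+d(P)^2)$ (the $d(P)$ powers and $d(P)^2$ conjugates being re-sifted at most $\O(1)$ amortized times per new pivot, or one simply bounds the re-sift count by noting only $d(P)$ pivot-insertions occur overall and organizes the loop so each conjugate/power is re-examined only when a relevant pivot changes). Multiplying the per-sift cost by $\O(t+d(P)^2)$ and absorbing the $\O(d(P)+e(P))$ cost of forming each power $v_d^{p_d}$ and each conjugate (one more call of Lemma \ref{squareMultiplyLem}(1), dominated by the sift cost) yields the claimed $\O((t+d(P)^2)d(P)(d(P)+e(P)))$ multiplications and $\O((t+d(P)^2)(d(P)^2 e(P)+d(P)e(P)^3))=\O((t+d(P)^2)(d(P)e(P)^3+d(P)^2 e(P)))$ other-purpose bit operations.

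The main obstacle I anticipate is the combinatorial bound on the number of sift operations in the closure phase: naively, one might re-sift all $\O(d(P)^2)$ conjugates every time the pivot set changes, which could give an extra factor of $d(P)$. The point to argue carefully is that the induced pcgs being constructed has at most $d(P)$ elements in total, so there are at most $d(P)$ pivot-insertion events over the whole run; by restructuring the closure loop so that after each insertion one only needs to re-sift the $\O(d(P))$ conjugates and $\O(1)$ power that involve the newly inserted pivot (all others having already been reduced to the identity and staying so, since reducing modulo a \emph{larger} pivot set can only keep a residue trivial), the total sift count telescopes to $\O(t + d(P)\cdot d(P)) = \O(t+d(P)^2)$. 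This is exactly the structure of the algorithm in \cite[Subsection 8.3.1]{HEO05a}, so the argument is that its standard correctness/termination analysis already encodes this amortization; the only new content here is pairing it with the per-operation costs from Lemma \ref{squareMultiplyLem}. The remaining verifications — that computing the depth and leading exponent of a normal-form word is $\O(d(P)e(P))$ bit operations, and that no hidden cost exceeds the stated bounds — are routine.
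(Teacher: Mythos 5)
Your proposal is correct and follows essentially the same route as the paper: both analyses run the {\sc InducedPolycyclicSequence}/{\sc Sift} machinery from \cite[Subsection 8.3.1]{HEO05a}, charge a single sift $\O(d(P)(d(P)+e(P)))$ multiplications and $\O(d(P)e(P)^3+d(P)^2e(P))$ other-purpose bit operations via Lemma \ref{squareMultiplyLem}(1), and bound the total number of sifts by $\O(t+d(P)^2)$ by observing that only $\O(d(P))$ pivot insertions occur and each contributes $\O(d(P))$ new elements to process. The amortization concern you flag in your last paragraph is resolved exactly as the paper does it, namely by noting that the while-loop of {\sc InducedPolycyclicSequence} only enqueues new powers and conjugates when the if-clause fires, i.e.\ when a trivial slot is replaced by a new pivot.
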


\begin{proof}
Consider the algorithm {\sc InducedPolycyclicSequence} from \cite[p.~294]{HEO05a}, which uses the algorithm {\sc Sift} from \cite[p.~294]{HEO05a} as a subroutine. For {\sc Sift}, it is immediate to check that a single call of it uses $O(d(P)^2+d(P)e(P))$ multiplications and $O(d(P)e(P)^3+d(P)^2e(P))$ other-purpose bit operations by Lemma \ref{squareMultiplyLem}(1) and the complexity of the extended Euclidean algorithm. Consequently, a single iteration of the unique while-loop in {\sc InducedPolycyclicSequence} uses $O(d(P)^2+d(P)e(P))$ multiplications and $O(d(P)e(P)^3+d(P)^2e(P))$ other-purpose bit operations. But the total number of iterations of that while-loop can be bounded as follows: Whenever the if-clause in line 7 is satisfied (which causes $\G$ to be $O(d(P))$ elements larger at the end of the iteration step than at the beginning of the step), an entry $1$ in $Z$ is replaced by a nontrivial element of $G$, which can only happen $O(d(P))$ times, whence the total number of iterations of the while-loop is in $O(t+d(P)^2)$.
\end{proof}

We now consider a modified version of the algorithm ModifyPcgs from \cite[p.~1451]{CEL04a}, given as Algorithm \ref{algo3} below.

\begin{algorithm}
\SetKwInOut{Input}{input}\SetKwInOut{Output}{output}

\Input{A finite solvable group $G$, given through a refined consistent polycyclic presentation $\langle Y\mid S\rangle$, where $Y=(y_1,\ldots,y_n)$; a sequence $\vec{w}=(w_1,\ldots,w_n)$ of numbers that are admissible weights (in the sense of \cite[Subsubsection 3.1.1, p.~1450]{CEL04a}) for the $y_i$ with respect to some fixed normal series $G=N_1\rhd\cdots\rhd N_r\rhd N_{r+1}=\{1\}$ of $G$; an element $g\in G$ in $Y$-collected form
\[
g=y_{d(1)}^{e_{d(1)}}y_{d(2)}^{e_{d(2)}}\cdots y_{d(s)}^{e_{d(s)}},
\]
with $s\in\IN$, $1\leq d(1)<d(2)<\cdots<d(s)\leq n$ and $e_{d(j)}\in\{1,2,\ldots,p_{d(j)}-1\}$ for $j=1,\ldots,s$, where $p_{d(j)}$ is the relative order of $y_{d(j)}$; an admissible weight $u$ for $g$ with respect to the above normal series; a pc group isomorphism $\iota$ (in the sense of Subsection \ref{subsec1P3}) from some other refined consistent polycyclic presentation $P=\langle X\mid R\rangle$ of $G$ to $\langle Y\mid S\rangle$.}
\Output{A refined consistent polycyclic presentation $\langle Z\mid T\rangle$ associated with a certain other pcgs $Z=(z_1,\ldots,z_n)$ of $G$ and a modified weight sequence $(w_1',\ldots,w_n')$ such that $w_i'$ is an admissible weight for $z_i$; moreover, a pc group isomorphism from $\langle X\mid R\rangle$ to $\langle Z\mid T\rangle$.}

\nl \If{$g=1$}{output the presentation $\langle Y\mid S\rangle$, weight sequence $\vec{w}$ and pc group isomorphism $\iota$ from the input and halt.}

\nl \If{$w_{d(1)}<u$}{compute the refined consistent polycyclic presentation $\langle Y'\mid S'\rangle=\langle y'_1,\ldots,y'_n\mid S'\rangle$ of $G$ associated with the pcgs $Y_g$ as in Lemma \ref{elementaryTransfoLem}, and also compute a pc group isomorphism $\iota':\langle Y\mid S\rangle\rightarrow\langle Y'\mid S'\rangle$. Call {\sc ModifyPcgs2} on $\langle Y'\mid S'\rangle$, $(w_1,\ldots,w_{d(1)-1},u,w_{d(1)+1},\ldots,w_n)$, $y_{d(1)}^{-e_{d(1)}}g=(y'_{d(2)})^{e_{d(2)}}\cdots (y'_{d(s)})^{e_{d(s)}}$, $w_{d(1)}$, $\iota'\circ\iota$.}

\nl \Else{Call {\sc ModifyPcgs2} on $\langle Y\mid S\rangle$, $\vec{w}$, $y_{d(2)}^{e_{d(2)}}\cdots y_{d(s)}^{e_{d(s)}}$, $u$, $\iota$.}
\caption{{\sc ModifyPcgs2}}\label{algo3}
\end{algorithm}

Note that unlike ModifyPcgs from \cite[p.~1451]{CEL04a}, this algorithm does not loop over all prime-power components of $g$, so the pcgs from the output may not be a prime-power pcgs (in the sense of \cite[beginning of Subsection 3.1, p.~1450]{CEL04a}) even if the input pcgs is one. The author considered this simplification when encountering difficulties proving that the original algorithm ModifyPcgs has theoretical worst-case complexity bounded by a polynomial in $\Coll(\langle X\mid R\rangle)$ (due to the \enquote{branching} that occurs by looping over the prime-power components); for our Algorithm \ref{algo3}, we can show the following:

\begin{lemmma}\label{modifyPcgsLem}
Algorithm \ref{algo3} terminates after using
\[
O(d(P)^6+d(P)^5e(P)+d(P)^4e(P)^2)
\]
multiplications of elements in $X$-collected form and
\[
O(d(P)^6e(P)+d(P)^5e(P)^3+d(P)^4e(P)^4)
\]
bit operations spent outside group element multiplications.
\end{lemmma}

\begin{proof}
At first glance, this seems straightforward: After the initial user-induced call of the algorithm, this recursive algorithm calls itself $O(d(P))$ times, and the complexity of the computations between two calls can be handled by Lemma \ref{elementaryTransfoLem}. There is, however, a subtlety to be taken into account: The group presentation to be modified is changed along the way, and the straightforward approach would always apply the fixed general multiplication algorithm $\Acal$ to the currently considered presentation $P'$ of $G$, for which it is not clear whether its worst-case multiplication complexity $\Coll(P')$ can be suitably bounded in terms of $\Coll(P)$. We can circumvent this though, by emulating these other multiplication algorithms over $P=\langle X\mid R\rangle$ as follows: At the beginning of each iteration step, we have an isomorphism $P\rightarrow P'$, and we want to subject $P'=\langle X'\mid R'\rangle$ to another elementary transformation step to obtain a presentation $P''=\langle X''\mid R''\rangle$ and compute an isomorphism $P\rightarrow P''$. By Lemma \ref{elementaryTransfoLem}(2,3), one can compute $P''$ as well as an isomorphism $P'\rightarrow P''$ using
\[
O(d(P')^4+d(P')^3e(P'))=O(d(P)^4+d(P)^3e(P))
\]
multiplications of elements in $X'$-collected form and
\[
O(d(P')^4e(P')+d(P')^3e(P')^3)=O(d(P)^4e(P)+d(P)^3e(P)^3)
\]
bit operations spent outside group element multiplications. We follow that approach, but whenever we would normally perform a multiplication of elements in $X'$-collected form using the algorithm $\Acal$, we instead bring the elements into $X$-collected form (using the inverse of the known isomorphism $P\rightarrow P'$, note our convention on pc group isomorphisms from the end of Subsection \ref{subsec1P3}), perform a multiplication over $P$ and transform the result back into $X'$-collected form. The first step requires us to make $O(d(P))$ substitutions, followed by $O(d(P))$ power computations over $P$ and $O(d(P))$ calls of the multiplication algorithm for $P$, overall accounting for $O(d(P)^2+d(P)e(P))$ multiplications of elements in $X$-collected form and $O(d(P)^2e(P))$ other-purpose bit operations by Lemma \ref{squareMultiplyLem}(1). For the last step, we use the algorithm {\sc ConstructiveMembershipTest} from \cite[p.~296]{HEO05a}, for which it was already observed in the proof of Lemma \ref{elementaryTransfoLem}(1) that a single call takes $O(d(P)^2+d(P)e(P))$ multiplications and $O(d(P)^2e(P)+d(P)e(P)^3)$ other-purpose bit operations. In total, computing $P''$ and an isomorphism $P'\rightarrow P''$ therefore costs us
\begin{align*}
&O((d(P)^2+d(P)e(P))\cdot(d(P)^4+d(P)^3e(P))) \\
&=O(d(P)^6+d(P)^5e(P)+d(P)^4e(P)^2)
\end{align*}
multiplications of elements in $X$-collected form and
\begin{align*}
&O(d(P)^4e(P)+d(P)^3e(P)^3+(d(P)^4+d(P)^3e(P))\cdot(d(P)^2e(P)+d(P)e(P)^3)) \\
&=O(d(P)^6e(P)+d(P)^5e(P)^3+d(P)^4e(P)^4)
\end{align*}
other-purpose bit operations. An isomorphism $P\rightarrow P''$ can be computed through composing the known isomorphisms $P\rightarrow P'$ and $P'\rightarrow P''$. More precisely, one can proceed as follows: First, express the elements of $X''$ in $X$-collected form. By going via $P'$, this involves $O(d(P)^2)$ substitutions, followed by $O(d(P)^2)$ computations of powers of elements in $X$-collected form and $O(d(P)^2)$ multiplications of elements in $X$-collected form. Hence, by Lemma \ref{squareMultiplyLem}(1), these computations require $O(d(P)^3+d(P)^2e(P))$ multiplications of elements in $X$-collected form and $O(d(P)^3e(P))$ other-purpose bit operations. Then, once the elements of $X''$ have been expressed in $X$-collected form, proceed to expressing the elements of $X$ in $X''$-collected form. The approach is analogous, but similarly to before, whenever we would carry out a multiplication of elements in $X''$-collected form, we substitute the now known $X$-collected forms of the elements of $X''$, carry out $O(d(P))$ power computations and multiplications of elements in $X$-collected form and bring the result back into $X''$-collected form via {\sc ConstructiveMembershipTest}. This means that per emulation of a single multiplication of elements in $X''$-collected form, we need $O(d(P)^2+d(P)e(P))$ multiplications of elements in $X$-collected form as well as $O(d(P)^2e(P)+d(P)e(P)^3)$ other-purpose bit operations. Hence, in total, expressing the elements of $X$ in $X''$-collected form takes
\[
O((d(P)^3+d(P)^2e(P))\cdot(d(P)^2+d(P)e(P)))=O(d(P)^5+d(P)^4e(P)+d(P)^3e(P)^2)
\]
multiplications of elements in $X$-collected form and
\begin{align*}
&O(d(P)^3e(P)+(d(P)^3+d(P)^2e(P))\cdot(d(P)^2e(P)+d(P)e(P)^3)) \\
&=O(d(P)^5e(P)+d(P)^4e(P)^3+d(P)^3e(P)^4)
\end{align*}
other-purpose bit operations.
\end{proof}

Note that the proof of Lemma \ref{modifyPcgsLem} does \emph{not} show that if $P$ and $Q$ are two refined consistent polycyclic presentations of the same abstract group, then
\begin{align*}
\Coll(Q)&\in O((d(P)^2+d(P)e(P))\Coll(P)+d(P)^2e(P)+d(P)e(P)^3) \\
&\subseteq O(\ell(P)^2\Coll(P)+\ell(P)^4)
\end{align*}
and vice versa, since for the proof idea to work, it is crucial that we know a pc group isomorphism $P\rightarrow Q$ in the first place.

The price which we pay for our simplification of the algorithm ModifyPcgs from \cite[p.~1451]{CEL04a} is that we cannot proceed completely analogously to there, but this will not be a problem. We note the following analogue of \cite[Lemma 8]{CEL04a}, which will be the basis of our further arguments:

\begin{lemmma}\label{modifyExhibitLem}
Let $G$ be a finite solvable group, given through a refined consistent polycyclic presentation $P=\langle X\mid R\rangle$, let $G=N_1\rhd N_2\rhd\cdots\rhd N_r\rhd N_{r+1}=\{1\}$ be a normal series in $G$, and let $Y_j$, for $j=2,\ldots,r$, be an $X$-induced pcgs of $N_j$ (whose entries are given in $X$-collected form). Moreover, let $P'$ resp.~$\alpha$ be the refined consistent polycyclic presentation of $G$ resp.~the pc group isomorphism $P\rightarrow P'$ obtained by successive applications of Algorithm \ref{algo3}, starting with the input containing the presentation $P$, the admissible weight sequence $(1,\ldots,1)$ for the generators of $P$ and the identity isomorphism $P\rightarrow P$, and successively modifying, for $j=2,\ldots,r$, by each entry of $Y_j$ with regard to the admissible weight $u_j=j$ (note that this includes the computation of a new collected form for each entry of each $Y_j$ once one wants to modify by that entry).

Then $P'$ is the polycyclic presentation associated with a pcgs $Y$ of $G$ such that for $j=1,\ldots,r$, the sequence of entries in $Y$ that are displayed in the final output to have admissible weight at least $j$ form a pcgs for $N_j$; in particular, the displayed admissible weight of each entry of $Y$ is the final weight (in the sense of \cite[Subsubsection 3.1.1, p.~1450]{CEL04a}) of that entry with regard to the normal series, and $Y$ exhibits (in the sense of \cite[beginning of Section 2, p.~1446]{CEL04a}) each member $N_j$ of the normal series. Moreover, the described computational process requires
\[
O(d(P)^8+d(P)^7e(P)+d(P)^6e(P)^2)
\]
multiplications of elements in $X$-collected form and
\[
O(d(P)^8e(P)+d(P)^7e(P)^3+d(P)^6e(P)^4)
\]
bit operations spent outside group element multiplications.
\end{lemmma}

\begin{proof}
All assertions apart from the one on the complexity can be proved similarly to \cite[Lemma 8]{CEL04a}, which requires one to first prove an analogue of \cite[Lemma 7]{CEL04a}. However, the situation here is simpler than in \cite[Lemma 8]{CEL04a}, due to our assumption that each pcgs $Y_j$ is $X$-induced, and this simplified situation allows for an alternative proof, which is less elegant, but yields a complete understanding of the effects of the calls of Algorithm \ref{algo3} with which Lemma \ref{modifyExhibitLem} is concerned; we will now give this proof.

First, let us note that when $Z$ is a pcgs of $G$ and $g\in G$, then the notions of $Z$-depth and $Z_g$-depth (see Lemma \ref{elementaryTransfoLem}(1) for the meaning of the notation $Z_g$) of an element of $G$ coincide. It follows that no matter at which stage of the computational process we are, the currently considered pcgs $Z$ of $G$ (which is either left the same or replaced by a pcgs of the form $Z_g$ in the next call of {\sc ModifyPcgs2}) induces the same notion of depth as the original pcgs $X$, and throughout the proof, we will only speak of the \emph{depth} of an element of $G$ (without referring to a particular pcgs of $G$).

Setting $Y_1:=X$, one can show the following by induction on $j$: For each $j=2,3,\ldots,r$, when one has just finished modifying by the elements of $Y_{j-1}$, resulting in a modified pcgs $Z_{j-1}$ and weight sequence $\vec{w}_{j-1}$, then these two tuples can be characterized as follows: For $d=1,2,\ldots,d(P)$,
\begin{itemize}
\item the $d$-th entry of $\vec{w}_{j-1}$ is the largest element $w(j-1,d)\in\{1,2,\ldots,j-1\}$ such that $N_{w(j-1,d)}$ has an element of depth $d$, and
\item the $d$-th entry of $Z_{j-1}$ is the unique entry of $Y_{w(j-1,d)}$ of depth $d$.
\end{itemize}
Indeed, this is true by definition for $j=2$. For the induction step, assume that it is true for some $j\in\{2,3,\ldots,r-1\}$. Note that the assumption that $Y_j$ is $X$-induced implies that no two distinct entries of $Y_j$ have the same depth. We are done with the inductive proof if we can show that the overall effect of modifying by the entries of $Y_j$ is that each entry of $Z_{j-1}$ which has the same depth as one of the entries of $Y_j$ is replaced by that unique entry of $Y_j$, that the corresponding entries of $\vec{w}_{j-1}$ are replaced by $j$, and that nothing else is changed.

To that end, write $Y_j=(y_{j,1},y_{j,2},\ldots,y_{j,\ell(j)})$, and note that since $Y_j$ is induced, its entries are ordered by increasing depth. We prove by induction on $s=0,1,\ldots,\ell(j)$ that after successively modifying by $y_{j,1},\ldots,y_{j,s}$, each entry of $Z_{j-1}$ which has the same depth as one of $y_{j,1},\ldots,y_{j,s}$ is replaced by that unique element, that the corresponding entries of $\vec{w}_{j-1}$ are replaced by $j$, and that nothing else is changed. The induction base, $s=0$, is vacuously true, so assume that $1\leq s<\ell(j)$. We consider the effect of modifying by $y_{j,s+1}$. Say the current pcgs is $Z=(z_1,\ldots,z_{d(p)})$, the current weight sequence is $\vec{w}=(w_1,\ldots,w_{d(P)})$, and write
\[
y_{j,s+1}=z_{d(1)}^{e_{d(1)}}\cdots z_{d(m)}^{e_{d(m)}}
\]
in $Z$-collected form. Then the following happens when modifying by $y_{j,s+1}$: First, the $d(1)$-th entry of $Z$ is replaced by $y_{j,s+1}$, and the $d(1)$-th entry of $\vec{w}$ is replaced by $j$. In its subsequent self-call, Algorithm \ref{algo3} will check whether the \enquote{tail}
\[
z_{d(2)}^{e_{d(2)}}\cdots z_{d(m)}^{e_{d(m)}}
\]
of $y_{j,s+1}$ is to replace the depth $d(2)$ entry of the pcgs. But the assigned weight used for that tail is $w_{d(1)}\in\{1,2,\ldots,j-1\}$, and since this is an admissible weight for the tail, by the inner and outer induction hypotheses, the entry of the pcgs with depth $d(2)$ has assigned weight at least $w_{d(1)}$, and so that pcgs entry will not be replaced by the tail. An analogous argument shows that also none of the other tails
\[
z_{d(k)}^{e_{d(k)}}\cdots z_{d(m)}^{e_{d(m)}},
\]
with which the subsequent self-calls of Algorithm \ref{algo3} are concerned, will replace the corresponding pcgs entry. Hence, indeed, the only changes happening when modifying by $y_{j,s+1}$ are that the $d(1)$-th entries of $Z$ and $\vec{w}$ are replaced by $y_{j,s+1}$ and $j$ respectively. This concludes the inner and thus also the outer induction.

Now, by the above characterization of the pcgs and weight sequence after successively modifying by the entries of $Y_2,Y_3,\ldots,Y_j$ (which was the subject of the outer induction), applied with $j:=r$, we find that in the final result, for each $w\in\{1,2,\ldots,r+1\}$, the pcgs entries with displayed weight at least $w$ are elements of $N_w$, and their depths are just those positive integers that occur as the depth of some element of $N_w$; consequently, the final pcgs exhibits $N_w$, as required.

As for the complexity assertion, just note that the number of (non-self) calls of Algorithm \ref{algo3} in the described modification process is in $O(d(P)^2)$ and use Lemma \ref{modifyPcgsLem} (and note that the complexity of computing the new collected forms for the entries of the $Y_j$ via {\sc ConstructiveMembershipTest} from \cite[p.~296]{HEO05a} is non-dominating, see the proof of Lemma \ref{elementaryTransfoLem}(1)).
\end{proof}

As a final preparation for the proof of Theorem \ref{mainTheo3}, we note:

\begin{lemmma}\label{elAbSeriesLem}
For any finite solvable group $G$, given through a refined consistent polycyclic presentation $P=\langle X\mid R\rangle$, one can compute the following, using
\[
O(d(P)^6+d(P)^5e(P))
\]
multiplications of elements in $X$-collected form and
\[
O(d(P)^5e(P)^3+d(P)^6e(P))
\]
bit operations spent outside group element multiplications:

\begin{itemize}
\item the length $r$ of the LG-series in $G$,
\item tuples $S_1,\ldots,S_r$ of elements of $G$ in $X$-collected form such that for $i=1,\ldots,r$, one has that $S_i$ is an induced pcgs of the $i$-th term $G_i$ in the LG-series $G=G_1\rhd G_2\rhd\cdots\rhd G_r\rhd G_{r+1}=\{1\}$ of $G$.
\end{itemize}
\end{lemmma}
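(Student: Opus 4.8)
\textbf{Proof plan for Lemma \ref{elAbSeriesLem}.}

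The plan is to compute the LG-series bottom-up as a refinement of a suitable characteristic series whose factors are nilpotent (the lower nilpotent series, or equivalently the series obtained by iterating ``Fitting subgroup modulo the previous term''), then inside each nilpotent factor refine further so that the factors become elementary abelian and, among those of the same prime $p$, the ``nilpotent-central'' ones (those central modulo lower terms and acted on trivially by...) come first in the prescribed order; this is exactly the elementary abelian nilpotent-central series of \cite[Subsection 2.1]{CEL04a}. Concretely, I would first compute the derived series of $G$ (repeatedly forming commutator subgroups of the $X$-induced pcgs with itself via \textsc{InducedPolycyclicSequence}), which already gives us an induced pcgs through each term with abelian factors; then I would refine each abelian factor $G^{(k)}/G^{(k+1)}$ into its $p$-primary pieces and then into elementary abelian layers, and finally reorder the layers within each nilpotent block according to the nilpotent-central criterion. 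At each stage the terms of the series are obtained as $X$-induced pcgs via Lemma \ref{inducedPcgsLem}, so the output tuples $S_1,\dots,S_r$ are exactly the induced pcgs of the terms $G_i$, and $r$ is their number.

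For the complexity bound, the dominant contribution should come from the $\O(d(P))$ calls to \textsc{InducedPolycyclicSequence} needed to build all the terms of the series (the derived series has length $\O(\log d(P))$, but the full refinement has $\O(d(P))$ terms, and each commutator-subgroup or refinement computation feeds $\O(d(P)^2)$ generators — all commutators $[x_i,x_j]$ or all $p$-th powers — into Lemma \ref{inducedPcgsLem}). Invoking Lemma \ref{inducedPcgsLem} with $t\in\O(d(P)^2)$ gives $\O(d(P)^2\cdot d(P)\cdot(d(P)+e(P)))=\O(d(P)^4+d(P)^3e(P))$ multiplications and $\O(d(P)^2\cdot(d(P)e(P)^3+d(P)^2e(P)))=\O(d(P)^3e(P)^3+d(P)^4e(P))$ other-purpose bit operations per refinement step; multiplying by the $\O(d(P)^2)$ terms of the series yields $\O(d(P)^6+d(P)^5e(P))$ multiplications and $\O(d(P)^5e(P)^3+d(P)^6e(P))$ bit operations, matching the claimed bound. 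The reordering of elementary abelian layers within a nilpotent block is cheap by comparison: deciding whether a given layer is nilpotent-central amounts to checking commutators of a bounded number of generators against lower terms, which costs at most $\O(d(P)^2)$ multiplications per layer and hence $\O(d(P)^3)$ in total, absorbed into the stated bound.

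The main obstacle I expect is the bookkeeping needed to justify the $\O(d(P)^2)$ bound on the total number of \textsc{InducedPolycyclicSequence} calls and, more subtly, to argue that the refinement-and-reorder procedure genuinely produces the LG-series rather than some other elementary abelian series with nilpotent factors --- i.e., pinning down the precise ordering criterion from \cite{CEL04a} (nilpotent-central layers first, in nondecreasing prime order within each nilpotent block) and checking that the characteristic series one lands on is independent of the choices made. This is conceptually routine given \cite[Subsection 2.1]{CEL04a}, but it requires care because the correctness of the downstream Step 1 of Algorithms \ref{algo1} and \ref{algo2}, and of Lemma \ref{modifyExhibitLem} applied to it, depends on $S_1,\dots,S_r$ being induced pcgs of the \emph{genuine} LG-series terms. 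A secondary but entirely mechanical point is verifying that each intermediate term is $\alpha$-invariant / normal as required so that the commutator and power subgroups are themselves normal and the recursion is well-defined; this follows from the characteristic nature of each construction step and needs only to be stated.
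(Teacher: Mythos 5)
Your complexity bookkeeping is essentially the paper's: the bound comes from $\O(d(P)^2)$ invocations of {\sc InducedPolycyclicSequence}, each fed $\O(d(P)^2)$ generators, so that Lemma \ref{inducedPcgsLem} with $t\in\O(d(P)^2)$ gives $\O(d(P)^4+d(P)^3e(P))$ multiplications and $\O(d(P)^3e(P)^3+d(P)^4e(P))$ other-purpose bit operations per call, and the totals follow. The genuine gap is in the group-theoretic construction: your concrete plan starts from the \emph{derived series} and hopes to reach the LG-series by refining the abelian factors and \enquote{reordering layers}. The LG-series of \cite[Subsection 2.1]{CEL04a} is built on the \emph{lower nilpotent series} $G=L_1\rhd L_2\rhd\cdots$, where $L_{i+1}$ is the nilpotent residual of $L_i$ (the stable term of its lower central series), each nilpotent factor then being refined by its lower elementary central series and finally by Sylow decomposition. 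Since $G/L_2$ is only nilpotent and not abelian, one has $L_2\leq G'$ but in general $G''\not\leq L_2$; e.g.\ for $G=N\times H$ with $N$ a $2$-group of derived length $3$ and $H$ nonabelian of order $6$, the derived series term $G''=N''$ and the LG-series term $L_2\cong C_3$ are incomparable subgroups, so no single chain contains both. Consequently the terms of any elementary abelian refinement of the derived series form a different chain from the LG-series, and no permutation of layers within blocks can repair this --- reordering a pcgs changes which series it \emph{refines} only when the relevant layers commute appropriately, which is exactly what fails here. (You flagged this as your \enquote{main obstacle}; it is not a bookkeeping issue but a defect of the chosen starting series. Also, iterating \enquote{Fitting subgroup modulo the previous term} yields the \emph{upper} nilpotent series, which is yet a third, generally different, series.)

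The repair is to follow the definition directly, as the paper does, in three stages. First, compute each $L_{i+1}$ as the limit of the lower central series of $L_i$, using that $[X,Y]$ generates $[G,H]$ when $Y$ is an induced pcgs of $H$ (\cite[Lemma 8.39]{HEO05a}), so each of the $\O(d(P))$ steps per residual feeds $\O(d(P)^2)$ commutators into Lemma \ref{inducedPcgsLem}. Second, refine each nilpotent factor $L_i/L_{i+1}$ by its lower elementary central series, generating each successive term from the commutators with the top term, the relative-order powers of the previous term's induced pcgs, and an induced pcgs of $L_{i+1}$. Third, split each resulting elementary factor into its Sylow layers by adjoining $p$-th powers. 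Each stage again amounts to $\O(d(P)^2)$ calls of {\sc InducedPolycyclicSequence} on $\O(d(P)^2)$ generators, so your arithmetic (and the claimed bound) is recovered unchanged once the series being computed is the correct one.
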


\begin{proof}
Throughout the proof, we use the notation and terminology from \cite[Subsection 2.1]{CEL04a}. We proceed in the following three steps:

\begin{enumerate}
\item Compute (induced polycyclic generating sequences of the members of) the lower nilpotent series of $G$.
\item Compute the refinement of the lower nilpotent series of $G$ by the lower elementary central series of each factor (the nilpotent-central series of $G$).
\item Compute the elementary abelian nilpotent-central series, i.e., the LG-series of $G$, by further refining the nilpotent-central series of $G$ using the Sylow subgroups of its factors.
\end{enumerate}

For Step (1): Assume that $Y$ is an $X$-induced pcgs of a subgroup $H\leq G$. Then by \cite[Lemma 8.39]{HEO05a}, $[X,Y]:=\{[x,y]\mid x\in X,y\in Y\}$ is a generating subset of $[G,H]$. Computing all $O(d(P)^2)$ members of $[X,Y]$ requires $O(d(P)^3)$ multiplications (of elements in $X$-collected form) and $O(d(P)^3e(P))$ other-purpose bit operations (as the number of multiplications resp.~other-purpose bit operations needed for computing a single commutator in $G$ is in $O(d(P))$ resp.~in $O(d(P)e(P))$ by Lemma \ref{squareMultiplyLem}(1)). Once $[X,Y]$ has been computed, one can gain an induced pcgs for $[G,H]$ from it using $O(d(P)^4+d(P)^3e(P))$ multiplications and $O(d(P)^3e(P)^3+d(P)^4e(P))$ other-purpose bit operations by Lemma \ref{inducedPcgsLem}. This allows us to compute the smallest term in the lower central series of $G$ using $O(d(P)^5+d(P)^4e(P))$ multiplications and $O(d(P)^4e(P)^3+d(P)^5e(P))$ other-purpose bit operations (we know when to stop by comparing the lengths of the computed induced polycyclic generating sequences), and iterating this $O(d(P))$ times, one computes the entire lower nilpotent series of $G$ using $O(d(P)^6+d(P)^5e(P))$ multiplications and $O(d(P)^5e(P)^3+d(P)^6e(P))$ other-purpose bit operations.

For Step (2): Focus on a single factor $G_n/G_{n+1}$ in the lower nilpotent series of $G$ (there are $O(d(P))$ such factors). For an induced pcgs $Y$ of some subgroup $H\leq G$, define $Y^{\sharp}:=\{y^{\rord(y)}\mid y\in Y\}$, where $\rord(y)$ denotes the relative order of $y$ (with respect to either of $X$ or $Y$). Let $Y_1$ resp.~$Z$ be the induced pcgs of $G_n$ resp.~of $G_{n+1}$ computed in Step (1). For $i\geq 2$, we recursively compute an induced pcgs $Y_i$ of the subgroup $H_i$ of $G$ projecting onto $\lambda_i(G_n/G_{n+1})$ (see \cite[Subsection 2.1]{CEL04a} for the meaning of this notation) under the canonical projection $G\rightarrow G/G_{n+1}$. We do so as in the proof of Lemma \ref{inducedPcgsLem}, by applying {\sc InducedPolycyclicSequence} from \cite[p.~294]{HEO05a} to the generating subset $[Y_1,Y_{i-1}]\cup Y_{i-1}^{\sharp}\cup Z$ of $H_i$. The computation of $[Y_1,Y_{i-1}]\cup Y_{i-1}^{\sharp}\cup Z$ requires $O(d(P)^3+d(P)e(P))$ multiplications as well as $O(d(P)^3e(P))$ other-purpose bit operations by Lemma \ref{squareMultiplyLem}(1), and by Lemma \ref{inducedPcgsLem}, the subsequent application of {\sc InducedPolycyclicSequence} takes $O(d(P)^4+d(P)^3e(P))$ multiplications and $O(d(P)^3e(P)^3+d(P)^4e(P))$ other-purpose bit operations. Therefore, a single factor in the lower nilpotent series of $G$ can be refined using $O(d(P)^5+d(P)^4e(P))$ multiplications and $O(d(P)^4e(P)^3+d(P)^5e(P))$ other-purpose bit operations, and it takes $O(d(P)^6+d(P)^5e(P))$ multiplications and $O(d(P)^5e(P)^3+d(P)^6e(P))$ other-purpose bit operations to refine the entire series.

For Step (3): Focus on a single factor $G_{i,j}/G_{i,j+1}$ in the nilpotent-central series of $G$. Let $Y_0$ resp.~$Z$ be the induced pcgs of $G_{i,j}$ resp.~of $G_{i,j+1}$ known from Step (2). Moreover, let $p_1,\ldots,p_{\ell}$ be the prime divisors of $|G_{i,j}/G_{i,j+1}|$ (i.e., the relative orders of the entries of $Y_0$ whose depth is not among the depths of the entries of $Z$). Set $H_0:=G_{i,j}$, and recursively compute an induced pcgs $Y_{k+1}$ of $H_{k+1}:=H_k^{p_k}G_{i,j+1}$ by applying {\sc InducedPolycyclicSequence} from \cite[p.~294]{HEO05a} to the generating subset $Y_k^{p_k}\cup Z=\{y^{p_k}\mid y\in Y_k\}\cup Z$ of $H_{k+1}$. By Lemma \ref{squareMultiplyLem}(1), it takes $O(d(P)^2+d(P)e(P))$ multiplications as well as $O(d(P)^2e(P))$ other-purpose bit operations to compute $Y_k^{p_k}\cup Z$, and it takes another $O(d(P)^4+d(P)^3e(P))$ multiplications and $O(d(P)^3e(P)^3+d(P)^4e(P))$ other-purpose bit operations to apply {\sc InducedPolycyclicSequence} to it (see Lemma \ref{inducedPcgsLem} and its proof). Since $\ell\in O(d(P))$, it therefore takes $O(d(P)^5+d(P)^4e(P))$ multiplications and $O(d(P)^4e(P)^3+d(P)^5e(P))$ other-purpose bit operations to refine a single factor $G_{i,j}/G_{i,j+1}$, and thus Step (3) in total takes $O(d(P)^6+d(P)^5e(P))$ multiplications and $O(d(P)^5e(P)^3+d(P)^6e(P))$ other-purpose bit operations.
\end{proof}

\begin{proof}[Proof of Theorem \ref{mainTheo3}]
By Lemmas \ref{modifyExhibitLem} and \ref{elAbSeriesLem}, we can compute, using
\[
O(d(P)^8+d(P)^7e(P)+d(P)^6e(P)^2)
\]
multiplications of elements in $X$-collected form and
\[
O(d(P)^8e(P)+d(P)^7e(P)^3+d(P)^6e(P)^4)
\]
other-purpose bit operations, an isomorphism from $P$ to a refined consistent polycyclic presentation $P'$ of $G$ associated with a pcgs of $G$ that exhibits (in the sense of \cite[beginning of Section 2, p.~1446]{CEL04a}) each member of the LG-series of $G$, as well as the associated sequence of final weights (in the sense of \cite[Subsubsection 3.1.1, p.~1450]{CEL04a}). By \cite[Lemma 5, p.~1450]{CEL04a}, to achieve the same situation with regard to a presentation $\tilde{P}$ with an associated pcgs that even \emph{refines} (in the sense of \cite[beginning of Section 2, p.~1446]{CEL04a}) that series, we just have to order the pcgs entries by increasing weight (preserving the order among entries with the same weight) and accordingly relabel variable indices in the defining relations and in the images of the isomorphism as well as reorder the images of the inverse isomorphism and the weight sequence entries, which can all be done in $O(d(P)^8)$ bit operations. The total number of bit operations needed therefore lies in
\begin{align*}
O(&(d(P)^8+d(P)^7e(P)+d(P)^6e(P)^2)\Coll(P) \\
&+d(P)^8e(P)+d(P)^7e(P)^3+d(P)^6e(P)^4) \\
&\subseteq O(\ell(P)^8\Coll(P)+\ell(P)^{10}),
\end{align*}
as required.
\end{proof}

\subsection{Complexity analysis for the rest of Algorithms \ref{algo1} and \ref{algo2}}\label{subsec3P2}

Note that we have already given details on how to carry out the computations in the remaining steps of the two algorithms in Subsection \ref{subsec2P1}. Set $P:=\langle X\mid R\rangle$, the presentation of $G$ from the input of Algorithms \ref{algo1} and \ref{algo2}. We do not go into as much detail as in Subsection \ref{subsec3P1} here, particularly since at this point, we are only interested in proving subexponential complexity. We will, however, mention what we consider to be the most important ideas of the remaining complexity analysis.

For Algorithm \ref{algo1}, observe that Theorem \ref{mainTheo3} does not cover the last bullet point (the computation of the $\vec{g}$-exponent matrix of $\alpha$), but this can be easily handled via the computed isomorphism $P\rightarrow\langle Y\mid S\rangle$. The coefficients of the matrices $M_i$ from Step 6 can be read off directly from the $\vec{g}$-exponent matrix of $\alpha$. Moreover, the number $r$ and the dimension of each $M_i$ are bounded from above by the composition length $d(P)$ of $G$, hence also by the presentation input length $\ell(P)$. That the computation of the numbers $o_i$ requires expectedly subexponentially (in $\ell(P)$) many bit operations therefore follows from \cite{CL97a} and the considerations on integer factorization from Subsection \ref{subsec2P1}. After computing each $o_i$ and their least common multiple $o$ (see Step 8), we compute the $\vec{g}$-exponent matrix $\beta=\alpha^o$ using the square-and-multiply approach from Lemma \ref{squareMultiplyLem}(3) with respect to the collection algorithm of $\langle Y\mid S\rangle$, which is okay since the (subexponential) bound on the multiplication complexity given in \cite[Theorem, p.~2]{Hof04a} only depends on parameters of the abstract group $G$ (one could, of course, also use the \enquote{emulation strategy} from the proof of Lemma \ref{modifyPcgsLem} to reduce the complexity of at least that step to $O(\ell(P)^c\Coll(P))$ bit operations for some absolute constant $c$). The same applies to the potential further computations of automorphism powers in Step 13.

For Algorithm \ref{algo2}, in view of the explanations for Algorithm \ref{algo1} above, we only need to analyze Steps 9 and 11 further. For Step 11, just use a square-and-multiply approach via the formula $\A_{t_1,\alpha_1}\circ\A_{t_2,\alpha_2}=\A_{t_1\alpha_1(t_2),\alpha_1\circ\alpha_2}$. For Step 9: As noted in Subsection \ref{subsec2P1}, we can transform each matrix $M_i$ into its rational canonical form within $O({\ell_i}^c)\subseteq O(\ell(P)^c)$ bit operations (where, by \cite[pp.~4 and 140]{Sto00a}, $c$ may be chosen as $2.4$) and directly read off the invariant factors $P_i$ from it. Before trying to factorize the polynomial $P_i$ with Berlekamp's Las Vegas algorithm from \cite{Ber70a}, we first test whether it is irreducible using Rabin's algorithm, see \cite[Lemma 1]{Rab80a}; note that this also requires us to first determine the prime factors of $\deg{P_i}\leq d(P)\in O(\ell(P))$. As long as there is a factor in the intermediate factorizations of $P_i$ which is found to be reducible by Rabin's algorithm, we use Berlekamp's algorithm to split that factor up further in expectedly polynomially many bit operations. This combination of irreducibility testing and, where applicable, searching for smaller factors only needs to be applied $O(\deg{P_i})\subseteq O(\ell(P))$ times until the polynomial $P_i$ has been fully factored. The rest of the complexity analysis for Step 9 (i.e., of the computations described in bullet point 5 in Subsection \ref{subsec2P1}) is straightforward.

\section{Concluding remarks}\label{sec4}

We conclude with some remarks on computational problems related to the ones discussed in this paper. In Subsection \ref{subsec4P1}, we present a problem that is probably computationally hard, and in Subsection \ref{subsec4P2}, we talk about other problems for which subexponential-time algorithms can be given.

\subsection{Contrast to the cycle membership problem}\label{subsec4P1}

In the context of this paper, the following problem, which we call the \enquote{cycle membership problem}, may also be of interest: Given a finite solvable group $G$, an automorphism $\alpha$ of $G$ and elements $g_1,g_2\in G$, decide if $g_2$ lies on the cycle of $g_1$ under $\alpha$, i.e., whether there exists $n\in\IZ$ such that $\alpha^n(g_1)=g_2$. We now discuss a connection between this problem and the discrete logarithm problem, which indicates that the cycle membership problem is probably hard in general, even for the special case $G=\IZ/p\IZ$ with $p$ a prime.

Assume that we have an efficient algorithm (say, requiring $o(|G|)$ bit operations as $|G|\to\infty$, which is asymptotically better than the obvious brute-force approach) for solving the cycle membership problem. Then in particular, we have an efficient (requiring $o(p)$ bit operations as $p\to\infty$) algorithm to decide for a given triple $(p,a,b)$, where $p$ is a prime and $a,b\in\{1,\ldots,p-1\}$, whether $b$ is a power of $a$ modulo $p$. We claim that we then also have an efficient algorithm for the following promise problem, which is a restricted version of the discrete logarithm problem: For a given triple $(p,a,b)$ as above, but where additionally, the multiplicative order of $a$ modulo $p$ is a power of $2$, decide whether $b$ is a power of $a$ modulo $b$, and if so, output the unique $e\in\{0,\ldots,\ord_p(a)-1\}$ such that $b\equiv a^e\Mod{p}$. Indeed, writing the multiplicative order of $a$ modulo $p$ as $2^o$ with $o\in\IN$, if $b$ is a power of $a$ modulo $p$, then all $f\in\IN$ such that $b\equiv a^f\Mod{p}$ are congruent modulo $2^o$, and hence their first $o$ binary digits (starting to count from the ones digit) coincide. So if we assume, aiming for a recursive approach, that we know already the first $i$ such digits $c_0,\ldots,c_{i-1}$ for some $i\in\{0,\ldots,o-1\}$, then we can find the next digit by deciding whether $b\cdot d^{\sum_{j=0}^{i-1}{c_j2^j}}$, with $d$ the multiplicative inverse of $a$ modulo $p$, is a power of $a^{2^{i+1}}$ modulo $p$. We can stop this loop (knowing that the number of digits we have found is precisely $o$ and that they therefore comprise the significant digits of $e$) as soon as $a^{2^i}\equiv 1\Mod{p}$.

\subsection{Computational problems in the context of finite dynamical systems}\label{subsec4P2}

A \emph{finite dynamical system} (\emph{FDS}) is a finite set $S$ together with a function $f:S\rightarrow S$. People working on FDSs $(S,f)$ are usually interested in the behavior of $f$ under iteration; an important special case with many applications is when $S=k^n$ is a Cartesian power of a finite field $k$ and $f:k^n\rightarrow k^n$ is written as a polynomial function (see, for instance, \cite{CJLS06a}, which includes several references concerning applications in natural sciences such as biology, and \cite{OS10a}, which discusses potential cryptographic applications). Several computational problems in the context of FDSs are also interesting to study, such as the following, given an FDS $(S,f)$ and an element $s\in S$:

\begin{itemize}
\item Compute the size of the \emph{orbit of $s$ under $f$}, i.e., compute $|\{f^n(s)\mid n\in\IN\}|$.
\item Decide whether $s$ is periodic under $f$, i.e., whether $f^n(s)=s$ for some $n\in\IN^+$.
\item Compute the \emph{preperiod length of $s$ under $f$}, i.e., compute the smallest $t\in\IN$ such that $f^t(s)$ is periodic.
\end{itemize}

Our Algorithm \ref{algo2} solves the first problem in the special case where $S$ is a (finite) solvable group $G$ and $f$ is a bijective affine map of $G$. Without giving a detailed analysis, we note that the other two problems for $(S,f)=(G,\varphi)$, a finite solvable group together with an endomorphism, admit deterministic solution algorithms with complexity in $O(\ell(P)^c\Coll(P))$ for some positive constant $c$, where $P$ is the refined consistent polycyclic presentation through which $G$ is given. This is because by \cite[Theorem 4.2]{Car13a} and Lagrange's theorem, the preperiod length of any $g\in G$ under $\varphi$ is at most $\lfloor\log_2{|G|}\rfloor$, and in particular, the subgroup of $G$ consisting of the periodic points of $\varphi$ is just the image of $\varphi^{\lfloor\log_2{|G|}\rfloor}$.

\section{Acknowledgements}\label{sec6}

The author would like to thank Alexander Hulpke for providing his GAP implementation of the algorithm from \cite[Section 12.2, pp.~481f.]{DF04a}, which is used in the author's GAP implementations of Algorithms \ref{algo1} and \ref{algo2}, and also for providing helpful answers to questions on the functionality of order and cycle length computations in GAP raised by the author in the GAP Forum.

\end{document}